\documentclass{amsart}

\usepackage{graphicx}%
\usepackage{multirow}%
\usepackage{amsmath,amssymb,amsfonts}%
\usepackage{amsthm}%
\usepackage{mathrsfs}%
\usepackage[title]{appendix}%
\usepackage{xcolor}%
\usepackage{textcomp}%
\usepackage{manyfoot}%
\usepackage{algorithm}%
\usepackage{algorithmicx}%
\usepackage{algpseudocode}%
\usepackage{listings}%
\theoremstyle{thmstyleone}
\usepackage{dsfont,mathrsfs}

\usepackage[
a4paper,
inner=3.0cm,
outer=3.0cm,
top=3.0cm,
bottom=3.0cm
]{geometry}

\DeclareMathOperator*{\esssup}{ess\,sup}
\DeclareMathOperator*{\essinf}{ess\,inf}

\usepackage[breaklinks]{hyperref}

\newtheorem{theorem}{Theorem}[section]
\newtheorem{lemma}[theorem]{Lemma}

\theoremstyle{definition}
\newtheorem{definition}[theorem]{Definition}

\newtheorem{corollary}[theorem]{Corollary}
\newtheorem{proposition}[theorem]{Proposition}

\theoremstyle{remark}
\newtheorem{remark}[theorem]{Remark}

\numberwithin{equation}{section}

\begin{document}

\title[DRBSDEs with Inhomogeneous simple Lévy processes \& Dynkin games]{Doubly reflected BSDEs driven by Inhomogeneous simple Lévy processes: Applications to generalized Dynkin games}


\author{Badr Elmansouri}
\address{Cadi Ayyad University (UCA), National School of Applied Sciences of Marrakech (ENSA-M)}
\curraddr{BP 575, Avenue Abdelkrim Khattabi, Guéliz, Marrakech, 40000, Morocco}
\email{b.elmansouri@uca.ac.ma}
\thanks{The first author, \textbf{Badr Elmansouri}, wishes to express his sincere gratitude to Professors \textbf{Mohamed El Otmani} and \textbf{Mohamed El Jamali} for their valuable contributions to the study of BSDEs driven by inhomogeneous simple Lévy processes and their financial applications, as well as for the insightful discussions that helped improve this work.}

\author{Ibtissam Hdhiri}
\address{Department of Mathematics, Faculty of Sciences of Gabès, University of Gabès}
\curraddr{LR17ES11, Gabès, 6072, Tunisia}
\email{ibti.hdhiri@gmail.com}

\subjclass[2010]{Primary: 60H05, 60H15, 60H20. Secondary: 60H30.}

\keywords{Backward stochastic differential equations, Inhomogeneous Lévy process, American game option, Dynkin game}

\date{\today}


\begin{abstract}
	We study doubly reflected backward stochastic differential equations with jumps and two completely separated right-continuous with left limits barriers in a filtration generated by an inhomogeneous Lévy processes. We establish existence and uniqueness results under a stochastic Lipschitz condition on the driver by means of a penalization method. We also prove a comparison principle and present two closely related applications. The first concerns the nonlinear valuation of an American game option in such a Lévy market, while the second addresses the associated generalized Dynkin game under nonlinear expectation. Moreover, under suitable semicontinuity assumptions on the barriers, we establish the existence of a saddle point for the game.
\end{abstract}

\maketitle

\section{Introduction}
The notion of doubly reflected backward stochastic differential equations (DRBSDEs, for short) has proven to be a powerful tool for addressing various financial problems, particularly those related to the pricing and hedging of American game options with a deterministic time horizon $T \in (0,+\infty)$ in a variety of market models, whether perfect or imperfect. This concept was first introduced by Cvitanic and Karatzas \cite{Cvitanic1996} in a Brownian framework consisting of a complete probability space $(\Omega, \mathcal{F}, \mathbb{P})$ and the natural filtration $\mathbb{F} := (\mathcal{F}_t)_{t \leq T}$ generated by a standard Brownian motion $B$. In this setting, the authors studied the existence and uniqueness of solutions to the following DRBSDE:
\begin{equation}
	\left\{
	\begin{split}
		\text{(i)} &~ \mathbb{P}\text{-a.s. for all } t \in [0,T]\\
		&~ Y_{t}=\xi+\int_{t}^{T} f(s,Y_s,Z_s,V_s)ds+\left( K^{+}_{ T}-K^{+}_{t}\right)-\left( K^{-}_{ T}-K^{-}_{t }\right)
		-\int_{t}^{T} Z_s d B_s;\\
		\text{(ii)} &~  L_t \leq Y_t \leq U_t,~  \forall t \in [0,T]~\text{a.s.};\\
		\text{(iii)} &~ \text{Skorokhod condition:}
		\int_0^{T  }(Y_{t}-L_{t})dK^{+}_t=\int_0^{T  }(U_{t}-Y_{t})dK^{-}_t=0 \text{ a.s.}
	\end{split}
	\right.
	\label{basic intro}
\end{equation}
The processes denoted by $K^+$ and $K^-$ are two continuous, increasing reflecting processes that intervene when necessary to keep the state process $Y$ within the bounds imposed by the lower barrier $L$ and the upper barrier $U$ (also referred to as obstacles). In \cite{Cvitanic1996}, the authors established the existence and uniqueness of a solution under the assumption that the coefficient $f$ is Lipschitz continuous and that the barriers satisfy Mokobodzki's condition. This condition requires the existence of two non-negative supermartingales lying between the lower and upper barriers. Furthermore, it was shown that in a specific case, the equation admits a unique solution via the penalization method, provided that one of the reflecting barriers can be uniformly approximated by Itô processes. An application to the valuation of a classical Dynkin game was also presented. 

Following this line of research, several authors sought to relax the assumptions imposed on the barriers $L$ and $U$, as those adopted in \cite{Cvitanic1996} were considered overly restrictive. In this context, Hamadène and Hassani \cite{Hamadène2005} introduced the notion of local solutions under the complete separation condition
$$
L_t<U_t,\qquad \text{for all } t\in[0,T],
$$
and for a Lipschitz continuous driver. Building on this approach, Hamadène \cite{Hamadène2006} applied the notion of local solutions to the pricing of American game options in the standard Black--Scholes market; see also \cite{Essaky2016} and \cite{Hamadène2000}.

Under the same separation condition on the barriers, Hamadène and Hdhiri \cite{HamadeneHdhiri2006} studied DRBSDEs with a continuous generator satisfying a quadratic growth condition. They also established a connection with mixed zero-sum stochastic differential games and provided a financial application concerning the characterization of the yield of a recallable option under Knightian uncertainty and an exponential utility function.

Extending the analysis beyond the Brownian framework to a setting with jumps driven by a Poisson random measure, Hamadène and Wang \cite{HamadeneWang2009} studied DRBSDEs of the form \eqref{basic intro} in a Brownian--Poisson filtration. They established existence and uniqueness results for DRBSDEs with right-continuous with left limits (RCLL) obstacles by again relying on local solutions under the complete separation conditions
$$
L_t<U_t,\qquad \text{for all }t\in[0,T),
$$
and
$$
L_{t-}<U_{t-},\qquad \text{for all }t\in(0,T],
$$
together with a Lipschitz continuous driver $f$. In addition, they established a connection between the DRBSDE and a mixed zero-sum differential--integral game; see also \cite{HamadeneHassaniOuknine2010} and \cite{Lepeltier2007} for related results.

Another line of research focuses on weakening the Lipschitz condition on the driver $f$, replacing it with a stochastic Lipschitz condition. The motivation behind this consideration lies in the study of more realistic extensions of the Black–Scholes model, where model parameters are not necessarily bounded or deterministic. These include the risk-free rate, the stock volatility process, and other stochastic factors. In such cases, the pricing of American game options naturally leads to imposing a stochastic Lipschitz condition on the generator. This approach was first developed in the seminal work of El Karoui and Huang \cite{ElKaroui1997} for classical BSDEs in a general probability space (see also \cite{Elmansouri2024VMSTA} for a broader framework). It was later studied in the context of DRBSDEs in the Brownian setting by Marzougue and El Otmani \cite{MarzougueOtmani2017}, and extended by the same authors in \cite{MarzougueOtmani2021} to the Brownian–Poisson framework under the complete separation of barriers. Their analysis was motivated by the pricing problem of an American game option in a discontinuous market model with unbounded stochastic parameters. This problem was further addressed by El Otmani et al.~\cite{OtmaniJamaliMarzougue2022} in a Lévy market model with a stochastic interest rate for the risk-free asset. More recently, Elmansouri and El Otmani \cite{ElmansouriOtmani2024} extended these results to a general filtration setting driven by a right-continuous with left limits (RCLL) martingale. The authors established existence and uniqueness of solutions and provided a pricing characterization for a game option between two insiders with asymmetric information in an Azéma-type market (see also \cite{ElJamali2024,ElmansouriElOtmani2025} for a related contribution).

Recently, El Jamali and El Otmani \cite{JamaliOtmani2019} considered the class of inhomogeneous Lévy processes introduced by Kluge \cite{kluge2005time}, motivated by their various financial applications. In \cite{JamaliOtmani2019}, the authors established the chaotic representation property of such processes and applied their results to the pricing of European options in a non-homogeneous Lévy market. It is worth noting that a time-inhomogeneous (or non-homogeneous) Lévy process—also referred to as a process with independent increments and absolutely continuous characteristics—generalizes the notion of a standard (homogeneous) Lévy process. Unlike their homogeneous counterparts, non-homogeneous Lévy processes are characterized by non-stationary increments. Building on this, the same authors addressed in \cite{ElJamaliOtmani2022} the pricing of American game options via reflected BSDEs in a discontinuous market model, where the jumps arise from a non-homogeneous Poisson random measure. It should be emphasized that all of the above pricing problems were addressed using the classical linear expectation framework, which naturally leads to the standard formulation of Dynkin games in perfect markets.

Motivated by these developments, the present paper investigates doubly reflected BSDEs driven by both a Brownian motion and an independent martingale measure associated with an inhomogeneous Poisson random measure, thereby extending the existing literature on DRBSDEs to more general jump frameworks. In our setting, the reflecting barriers are assumed to be completely separated, and we replace the classical Mokobodzki condition with a more general condition allowing jumps. The driver is assumed to satisfy a stochastic Lipschitz condition. Under suitable square-integrability assumptions on the data, we establish existence and uniqueness results using a priori estimates and a penalization approximation method. Subsequently, we prove a general comparison theorem that extends the result of \cite{JamaliOtmani2019}, allowing for differences in the drivers, terminal conditions, and reflecting barriers. This result enables us to address the valuation of an American game option in a non-homogeneous Lévy market, where the game values are expressed using a nonlinear expectation rather than a classical one. As a result, we are led to study a generalized Dynkin game in a financial context. Additionally, under further regularity assumptions on the obstacles, we establish the existence of a saddle-point under the nonlinear expectation operator.

The paper is organized as follows: In Section \ref{sec1}, we introduce the mathematical framework. Section \ref{DRBSDE} formulates the DRBSDE, states the assumptions on the data, and provides key a priori estimates, which are then used to prove the existence and uniqueness of a solution. Section \ref{sec3} is devoted to establishing a general comparison principle for such DRBSDEs. Finally, Section \ref{sec44} is devoted to two closely related applications of our results. First, we study the nonlinear pricing of American game options in a possibly imperfect market driven by an inhomogeneous Lévy process. We then formulate the associated generalized Dynkin game under nonlinear expectation and establish the existence of a saddle point under suitable semicontinuity assumptions on the reflecting barriers.

\section{Preliminaries and assumptions}\label{sec1}
Let $T>0$ be a fixed time corresponding to the time horizon of all the models considered in what follows. We consider a complete probability space $(\Omega,\mathcal{F},\mathbb{P})$ carrying a standard one-dimensional Brownian motion $\left(B_t\right)_{t\leq T}$ and compensated Poisson martingales $\big(\widetilde{N}_t^{(k)}\big)_{t\leq T}$ associated with independent inhomogeneous Poisson processes
\[
N^{(k)}:=\big(N_t^{(k)}\big)_{t\leq T},
\qquad k\in\{1,\ldots,d\},
\]
where $d\in\mathbb{N}^{\ast}$. For every $t\in[0,T]$, we have
\[
\widetilde{N}_t^{(k)}
:=
N_t^{(k)}-\int_0^t\lambda_s^{(k)}\,ds,
\]
where the intensity function
\[
[0,T]\ni t\longmapsto
\lambda_t
=
\big(\lambda_t^{(1)},\ldots,\lambda_t^{(d)}\big)
\]
is deterministic and positive. Moreover,
\[
d\big\langle \widetilde{N}^{(k)},
\widetilde{N}^{(k')}\big\rangle_t
=
\lambda_t^{(k)}\delta_{k,k'}\,dt,
\]
where $\big\langle \widetilde{N}^{(k)},
\widetilde{N}^{(k')}\big\rangle$ denotes the predictable quadratic covariation of $\widetilde{N}^{(k)}$ and $\widetilde{N}^{(k')}$, and $\delta_{k,k'}$ denotes the Kronecker delta.

On $(\Omega,\mathcal{F},\mathbb{P})$, we define the filtration
$\mathbb{F}:=(\mathcal{F}_t)_{t\leq T}$ by
\[
\mathcal{F}_t
:=
\bigcap_{\varepsilon>0}
\left(
\mathcal{F}^0_{(t+\varepsilon)\wedge T}
\vee\mathcal{N}
\right),
\qquad t\in[0,T],
\]
where $\mathcal{N}$ denotes the collection of all $\mathbb{P}$-null sets in
$\mathcal{F}$ and
\[
\mathcal{F}^0_t
:=
\sigma\left\{X_s:\,s\leq t\right\},
\qquad t\in[0,T].
\]
Using the Brownian motion and the independent inhomogeneous Poisson
processes introduced above, we define the process
$X=(X_t)_{t\leq T}$ by
\[
X_t
=
\int_0^t\sqrt{c_s}\,dB_s
+
\sum_{k=1}^d
\int_0^t\gamma_s^{(k)}\,d\widetilde{N}_s^{(k)},
\qquad t\in[0,T].
\]
The process $X$ is an inhomogeneous Lévy process, that is, an additive
process with independent increments, continuous in probability, and
satisfying $X_0=0$.

Here, the volatility coefficient
\[
[0,T]\ni t\longmapsto c_t\in\mathbb{R}_{+}^{\ast}
\]
and the deterministic coefficients
$\big(\gamma_t^{(k)}\big)_{t\leq T}$, $k\in\{1,\ldots,d\}$, satisfy
\[
\int_0^T c_s\,ds
+
\sum_{k=1}^d
\int_0^T
\big|\gamma_s^{(k)}\big|^2\lambda_s^{(k)}\,ds
<+\infty.
\]
It follows from the definition of the inhomogeneous Lévy process
$X=(X_t)_{t\leq T}$ that $X$ is an RCLL and locally square-integrable
martingale with respect to $\mathbb{F}$. Moreover,
\[
\mathbb{E}\left[[X,X]_T\right]
=
\int_0^T c_s\,ds
+
\sum_{k=1}^d
\int_0^T
\big|\gamma_s^{(k)}\big|^2\lambda_s^{(k)}\,ds
<+\infty,
\]
where $[X,X]$ denotes the quadratic variation of $X$. Recall that, for
any square-integrable martingale $(M_t)_{t\leq T}$,
$
\mathbb{E}\left[[M,M]_T\right]
=
\mathbb{E}\left[\langle M,M\rangle_T\right]
$ 
(see Proposition 4.50 in \cite[p.~53]{Jacod2013}). Therefore, by
Proposition 4.50(c) in \cite[p.~53]{Jacod2013}, we conclude that $X$ is
a square-integrable $\mathbb{F}$-martingale.

For further details on time-inhomogeneous Lévy processes and their
financial applications, we refer the reader to
\cite{kluge2005time}.

Let us denote by $\mathcal{X}_{t-} := \lim\limits_{s \nearrow t} \mathcal{X}_s$ the left limit at time $t \in (0,T]$ of a given RCLL process $\mathcal{X}$, with the convention $\mathcal{X}_{0-} = \mathcal{X}_{0}$. The jump size at time $t \in [0,T]$ is denoted by $\Delta \mathcal{X}_t := \mathcal{X}_t - \mathcal{X}_{t-}$. Let us also denote by $\mathcal{T}_{\tau_1}^{\tau_2}$ the set of $[0,T]$-valued $\mathbb{F}$-stopping times $\tau$ such that $\tau_1 \leq \tau \leq \tau_2$, a.s., where $\tau_1$ and $\tau_2$ are two $[0,T]$-valued $\mathbb{F}$-stopping times satisfying $\tau_1 \leq \tau_2$, a.s. Furthermore, we denote the conditional expectation with respect to the $\sigma$-algebra $\mathcal{F}_t$ by $\mathbb{E}^{\mathcal{F}_t}[\cdot]$. Finally, for $x \in \mathbb{R}$, we recall that $x^{+} = \max(x,0)$ and $x^{-} = \min(-x,0) = -\min(x,0)=\max(-x,0)$.

To describe the parameters and the solution of our equation, we introduce the following processes and spaces. Let $\tau\in\mathcal{T}_0^T$ be a stopping time, and let $(\alpha_t)_{t\leq T}$ be a non-negative $\mathbb{F}$-adapted process. We associate with $\alpha$ the continuous increasing process $(A_t)_{t\leq T}$ defined by
\[
A_t:=\int_0^t \alpha_s^2\,ds,
\qquad t\in[0,T].
\]
For a fixed constant $\beta>0$, we then define the following spaces.

\begin{itemize}
	\item $\mathbb{L}^2_{\beta}(\mathcal{F}_\tau)$: the space of all $\mathbb{R}$-valued, $\mathcal{F}_{\tau}$-measurable random variables $\xi$ such that
	\[
	\left\|\xi\right\|^2_{\mathbb{L}^2_{\beta}(\mathcal{F}_\tau)}
	:=
	\mathbb{E}\left[e^{\beta A_{\tau}}|\xi|^2\right]
	<+\infty.
	\]
	
	\item $\ell^2$: the space $\mathbb{R}^d$ endowed with the norm
	\[
	\|v\|_{\ell^2}^2
	:=
	\sum_{k=1}^d |v^{(k)}|^2,
	\qquad
	v=\big(v^{(1)},v^{(2)},\ldots,v^{(d)}\big)\in\mathbb{R}^d.
	\]
	
	\item $\mathcal{K}^2$: the space of all $\mathbb{R}$-valued, $\mathbb{F}$-predictable, RCLL, increasing processes $(K_t)_{t\leq T}$ such that $K_0=0$ and
	\[
	\|K\|_{\mathcal{K}^2}^2
	:=
	\mathbb{E}\left[|K_T|^2\right]
	<+\infty.
	\]
	
	\item $\mathcal{H}^2$: the space of all $\mathbb{R}$-valued, $\mathbb{F}$-predictable processes $(Z_t)_{t\leq T}$ such that
	\[
	\|Z\|_{\mathcal{H}^2}^2
	:=
	\mathbb{E}\left[\int_0^T |Z_s|^2\,ds\right]
	<+\infty.
	\]
	
	\item $\mathcal{V}^2_\lambda$: the space of all $\mathbb{R}^d$-valued, $\mathbb{F}$-predictable processes $(V_t)_{t\leq T}$ such that
	\[
	\begin{aligned}
		\|V\|_{\mathcal{V}^2_\lambda}^2
		&:=
		\mathbb{E}\left[
		\int_0^T
		\left\|V_s\circ\sqrt{\lambda_s}\right\|_{\ell^2}^2\,ds
		\right] \\
		&=
		\sum_{k=1}^d
		\mathbb{E}\left[
		\int_0^T
		|V_s^{(k)}|^2\lambda_s^{(k)}\,ds
		\right]
		<+\infty,
	\end{aligned}
	\]
	where $V_s\circ\sqrt{\lambda_s}$ denotes the Hadamard product of the vectors $V_s$ and $\sqrt{\lambda_s}$, namely,
	\[
	V_s\circ\sqrt{\lambda_s}
	:=
	\left(
	V_s^{(1)}\sqrt{\lambda_s^{(1)}},
	\ldots,
	V_s^{(d)}\sqrt{\lambda_s^{(d)}}
	\right).
	\]
	For convenience, we set
	\[
	\|V_s\|_\lambda^2
	:=
	\left\|V_s\circ\sqrt{\lambda_s}\right\|_{\ell^2}^2
	=
	\sum_{k=1}^d |V_s^{(k)}|^2\lambda_s^{(k)}.
	\]
	
	\item $\mathcal{S}^2_{\beta}$: the space of all $\mathbb{R}$-valued, $\mathbb{F}$-adapted, RCLL processes $(Y_t)_{t\leq T}$ such that
	\[
	\|Y\|_{\mathcal{S}^2_{\beta}}^2
	:=
	\mathbb{E}\left[
	\sup_{0\leq t\leq T}
	e^{\beta A_t}|Y_t|^2
	\right]
	<+\infty,
	\]
	with the convention $\mathcal{S}^2:=\mathcal{S}^2_0$.
	
	\item $\mathcal{S}^{2,\alpha}_{\beta}$: the space of all $\mathbb{R}$-valued, $\mathbb{F}$-adapted processes $(Y_t)_{t\leq T}$ such that
	\[
	\|Y\|_{\mathcal{S}^{2,\alpha}_{\beta}}^2
	:=
	\mathbb{E}\left[
	\int_0^T
	e^{\beta A_s}|\alpha_sY_s|^2\,ds
	\right]
	<+\infty.
	\]
	
	\item $\mathcal{H}^2_{\beta}$: the space of all $\mathbb{R}$-valued, $\mathbb{F}$-predictable processes $(Z_t)_{t\leq T}$ such that
	\[
	\|Z\|_{\mathcal{H}^2_{\beta}}^2
	:=
	\mathbb{E}\left[
	\int_0^T
	e^{\beta A_s}|Z_s|^2\,ds
	\right]
	<+\infty.
	\]
	
	\item $\mathcal{V}^2_{\lambda,\beta}$: the space of all $\mathbb{R}^d$-valued, $\mathbb{F}$-predictable processes $(V_t)_{t\leq T}$ such that
	\[
	\begin{aligned}
		\|V\|_{\mathcal{V}^2_{\lambda,\beta}}^2
		&:=
		\mathbb{E}\left[
		\int_0^T
		e^{\beta A_s}\|V_s\|_\lambda^2\,ds
		\right] \\
		&=
		\sum_{k=1}^d
		\mathbb{E}\left[
		\int_0^T
		e^{\beta A_s}|V_s^{(k)}|^2
		\lambda_s^{(k)}\,ds
		\right]
		<+\infty.
	\end{aligned}
	\]
	
	\item
	$
	\mathfrak{B}^2_{\beta}
	:=
	\left(
	\mathcal{S}^2_{\beta}
	\cap
	\mathcal{S}^{2,\alpha}_{\beta}
	\right)
	\times
	\mathcal{H}^2_{\beta}
	\times
	\mathcal{V}^2_{\lambda,\beta}.
	$
	
	\item
	$
	\mathfrak{D}^2_{\beta}
	:=
	\left(
	\mathcal{S}^2_{\beta}
	\cap
	\mathcal{S}^{2,\alpha}_{\beta}
	\right)
	\times
	\mathcal{H}^2_{\beta}
	\times
	\mathcal{V}^2_{\lambda,\beta}
	\times
	\mathcal{K}^2
	\times
	\mathcal{K}^2.
	$ 
\end{itemize}

Finally, let us recall the predictable representation property for a non-homogeneous Lévy process X that admits a finite number $d-1$ of jump sizes, that is verifies within the filtration $\mathbb{F}$, which states that any every square integrable $\mathcal{F}_T$-measurable random variable $\xi$ admits a unique decomposition of the following form:
$$
\xi=\mathbb{E}\left[\xi\right]+\int_{0}^{T} Z_s dB_s+\sum_{k=1}^{d} \int_{0}^{T} V^{(k)}_s d\widetilde{N}^{(k)}_s,\quad t \in [0,T],
$$
where $Z \in \mathcal{H}^2$ and $V \in \mathcal{V}^2_\lambda$ (see, e.g. \cite[Section 2.3]{JamaliOtmani2019} for more details).

\section{Doubly reflected BSDEs: Existence and uniqueness results}
\label{DRBSDE}
\subsection{Presentation}
In this first part, we are looking for a quintuplet of processes $(Y_t,Z_t,V_t,K^{+}_t,K^{-}_t)_{t \leq T}$ such that:
\begin{equation}
	\left\{
	\begin{split}
		\text{(i)} &~ \mathbb{P}\text{-a.s. for all } t \in [0,T]\\
		&~ Y_{t}=\xi+\int_{t}^{T} f(s,Y_s,Z_s,V_s)ds+\left( K^{+}_{ T}-K^{+}_{t}\right)-\left( K^{-}_{ T}-K^{-}_{t }\right)\\
		&\qquad\qquad-\int_{t}^{T} Z_s d B_s-\sum_{k=1}^d\int_{t }^{T}V^{(k)}_s d \widetilde{N}^{(k)}_s;\\
		\text{(ii)} &~  L_t \leq Y_t \leq U_t,~  \forall t \in [0,T]~\text{a.s.};\\
		\text{(iii)} &~ \text{Skorokhod condition:}
		\int_0^{T  }(Y_{t-}-L_{t-})dK^{+}_t=\int_0^{T  }(U_{t-}-Y_{t-})dK^{-}_t=0 \text{ a.s.}
	\end{split}
	\right.
	\label{basic equation}
\end{equation}
\begin{remark}
	\begin{itemize}
		\item[$\bullet$]
		The state process $Y$ in DRBSDE (\ref{basic equation}) exhibits two types of jumps. The first type consists of the totally inaccessible jumps determined by the purely discontinuous martingales part $\big(\widetilde{N}_t^{(k)}\big)_{t \leq T}$. The second type is the predictable jumps, which stem from the negative jumps of lower obstacle $L$ and the positive jumps of upper barrier $U$.
		\item[$\bullet$] Let $K^{\pm,c}$ and (resp. $K^{\pm,d}$) be the continuous part  (resp. purely discontinuous part) of $K^{\pm}$ such that $K^{\pm,c}=K^{\pm}-K^{\pm,d}$ with $K^{\pm,d}=\sum_{0<t\leq \cdot}\Delta K^{\pm}_t$. Then,  the Skorokhod condition $\int_{0}^{T}(Y_{t-}-L_{t-})dK^{+}_s=0$ is equivalent to $\int_{0}^{T}(Y_{t}-L_{t})dK^{+,c}_s=0$ and $\Delta K^{+,d}_t= \left(L_{t-}-Y_t\right)^{+}\mathds{1}_{\{\Delta L_t<0\}}= \left(L_{t-}-Y_t\right)^{+}\mathds{1}_{\{Y_{t-}=L_{t-}\}}$. In the same way we can obtain the property related to the reflection with respect to the upper obstacle $U$.
		\item[$\bullet$] The purely discontinuous processes $(K^{\pm,d}_t)_{t \leq T}$ has the following expression : $\forall t \leq T$,
		$$
		K^{+,d}_t=\sum_{0<s\leq t} (L_{s-}-Y_{s})^{+}\mathds{1}_{\{\Delta L_s<0\}\cap\{Y_{s-}=L_{s-}\}\cap\{\Delta Y_s<0\}},
		$$
		and
		$$
		K^{-,d}_t=\sum_{0<s\leq t} (Y_{s}-U_{s-})^{+}\mathds{1}_{\{\Delta U_s>0\}\cap\{Y_{s-}=U_{s-}\}\cap\{\Delta Y_s>0\}}.
		$$
	\end{itemize}
	
	For explicit detailts on the last two remark points, we refer readers to \cite[Remark 2.1]{ElmansouriOtmani2024}.
	\label{Reflection remark}
\end{remark}

We adopt the following definition of a solution to the DRBSDE \eqref{basic equation}.

\begin{definition}
	Let $\beta > 0$. A solution to the DRBSDE \eqref{basic equation} with jumps, associated with the data $(\xi, f, L, U)$, is a quintuple of processes $(Y, Z,,V, K^{+}, K^{-})$ that satisfies (\ref{basic equation}) and belongs to the space $\mathfrak{D}_{\beta}^2$.
	\label{Definition of the solution}
\end{definition}

It should also be emphasized that the DRBSDE \eqref{basic equation} may not admit a solution for arbitrary barriers $L$ and $U$. Indeed, if $L = U$ and $U$ is not a special semimartingale, it is impossible to find a semimartingale $Y$ satisfying \eqref{basic equation}-(i) that coincides with $U$. Therefore, it is necessary to impose additional conditions on the data, which are outlined in the following paragraph.
\paragraph{Conditions on the data $(\xi,f,L,U)$}
The quadruplet $(\xi,f,L,U)$ is such that:
\begin{itemize}
	\item[\textbf{(H1)}] The terminal variable $\xi$ is $\mathcal{F}_T$-measurable such that $\xi \in \mathbb{L}^2_{\beta}(\mathcal{F}_T)$.
	
	\item[\textbf{(H2)}] The driver $f :  \Omega \times [0,T] \times \mathbb{R} \times \mathbb{R} \times \ell^2 \rightarrow \mathbb{R}$ is such that
	\begin{itemize}
		\item[(i)] For all $(y,z)$, the stochastic process $f(\cdot,y,z,v)$ is $\mathbb{F}$-progressively measurable.
		\item[(ii)] There exist three non-negative $\mathbb{F}$-adapted processes $(\kappa_t)_{t \leq T}$,  $(\lambda_t)_{t \leq T}$ and $(\varrho_t)_{t \leq T}$ such that
		\begin{itemize}
			\item[(a)] for all $t \in [0,T]$, $y$, $y^{\prime} \in \mathbb{R}$ and $z$, $z^{\prime} \in \mathbb{R}$, $v$, $v' \in \ell^2$
			$$
			\left| f(t,y,z,v)-f(t,y^{\prime},z^{\prime},v') \right| \leq \kappa_t  \left|y-y^{\prime} \right|+\theta_t  \left|z-z^{\prime} \right|+\varrho_t \|(v-v') \circ \sqrt{\lambda_t}\|_{\ell^2}.
			$$
			\item[(b)] There exists $\epsilon >0$ such that $\alpha_t^2:=\kappa_t+\theta^2_t+\varrho_t^2 \geq \epsilon$ for all $t \in [0,T]$.
		\end{itemize}
		\item[(iii)] The process $\left(\frac{f(t,0,0,0)}{\alpha_{t}}\right)_{t \leq T}$ belongs to  $\mathcal{H}^2_{\beta}$.
	\end{itemize}
	\item[\textbf{(H3)}]  The barriers $(L_t)_{t \leq T}$ and $(U_t)_{t \leq T}$ are real-valued $\mathbb{F}$-progressively measurable RCLL processes satisfying
	\begin{itemize}
		\item[(i)] $L_T \leq \xi \leq U_T$, $\mathbb{P}$-a.s.
		\item[(ii)] $\mathbb{E}\left[ \sup_{0 \leq t \leq T} \left| e^{\beta A_t} L^{+}_t \right|^2 \right]<+\infty$ and $\mathbb{E}\left[ \sup_{0 \leq t \leq T} \left| e^{\beta A_t} U^{-}_t \right|^2 \right]<+\infty$,
		\item[(iii)] $L_t < U_t$ for all $t \in [0,T[$ and $L_{t-} < U_{t-}$ for all $t \in [0,T][$.
	\end{itemize}
	
	\item[\textbf{(H4)}] The complete separation of the barriers $L$ and $U$ and their left limits (\textbf{(H3)}-(iii)) can be strengthened by the existence of a semimartingale $\mathsf{Y} := (\mathsf{Y}_t)_{t \leq T}$ satisfying
	\begin{equation}
		\mathsf{Y}_t = \mathsf{Y}_0 + \int_0^t \mathsf{Z}_s dB_s + \sum_{k=1}^d \int_0^t \mathsf{V}^{(k)}_s d\widetilde{N}^{(k)}_s - \mathsf{K}^{+}_t + \mathsf{K}^{-}_t, \quad \mathsf{Y}_T = \xi,
		\label{Definition of the process J}
	\end{equation}
	and
	\begin{equation}
		L_t \leq \mathsf{Y}_t \leq U_t, \quad t \in [0, T],
		\label{Domina J}
	\end{equation}
	where $\mathsf{K}^{\pm}$ are two non-decreasing $\mathbb{F}$-adapted continuous processes with $\mathsf{K}_0^{\pm} = 0$. Moreover, we assume that $(\mathsf{Z}, \mathsf{V}) \in \mathcal{H}^2 \times \mathcal{V}^2_\lambda$ and $\mathbb{E}\left[ \left( \mathsf{K}_T^{\pm} \right)^2 \right] < +\infty$.
\end{itemize}

\begin{remark}
	The existence of a semimartingale $\mathsf{Y}$ of the form (\ref{Definition of the process J}) that satisfies (\ref{Domina J}) in \textbf{(H4)} has been established recently by Elmansouri \cite{Elmansouri2024PJM} within a more general filtration framework. In the particular case involving a Poisson random measure, we refer to the work of Hamadène and Wang \cite{HamadeneWang2009}. We also refer to the contribution by Hamadène et al. \cite{HamadeneHassaniOuknine2010}, and in particular to Remark 4.1, for further interesting discussions.
\end{remark}

\subsection{A priori estimates and uniqueness}
Let us first state an auxiliary result concerning the first triplet $(Y,Z,V)$ of the solution to the DRBSDE \eqref{basic equation}.

\begin{lemma}\label{Lemma 1}
	Let $(Y,Z,V) \in \mathfrak{B}^2_{\beta}$. Then, the stochastic integrals 
	$$
	\int_{0}^{\cdot} e^{\beta A_s} Y_s Z_s dB_s \quad \text{and} \quad \sum_{k=1}^d \int_{0}^{\cdot} e^{\beta A_s} Y_{s-} V^{(k)}_s d\widetilde{N}^{(k)}_s
	$$ 
	are uniformly integrable martingales with zero expectation.
\end{lemma}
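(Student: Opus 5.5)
We will show that each stochastic integral is a local martingale whose running supremum is integrable. Such a local martingale is of class (D), hence a uniformly integrable martingale; since it starts at $0$, its expectation vanishes. The tool is the Burkholder--Davis--Gundy inequality with exponent $p=1$, applied to $M_t:=\int_0^t e^{\beta A_s}Y_sZ_s\,dB_s$ and to $N_t:=\sum_{k=1}^d\int_0^t e^{\beta A_s}Y_{s-}V^{(k)}_s\,d\widetilde{N}^{(k)}_s$.

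First, both processes are well-defined local martingales. The integrands are predictable, since $Y_{s-}$ is left-continuous and adapted and $Z,V$ are predictable; on $\{dB\}$ we may also replace $Y_s$ by $Y_{s-}$, because $Y$ has at most countably many jumps. Local integrability holds because $\sup_t e^{\beta A_t}|Y_t|^2<\infty$ a.s. and $\int_0^T e^{\beta A_s}(|Z_s|^2+\|V_s\|_\lambda^2)\,ds<\infty$ a.s. Hence $\int_0^T e^{2\beta A_s}|Y_{s-}|^2(|Z_s|^2+\|V_s\|^2_\lambda)\,ds<\infty$ a.s., and the stochastic integrals exist as local martingales.

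Next we bound the brackets. For the Brownian part we have $[M]_T=\int_0^T e^{2\beta A_s}|Y_s|^2|Z_s|^2\,ds$. By BDG, Cauchy--Schwarz and Young's inequality,
\[
\mathbb{E}\Big[\sup_{t\le T}|M_t|\Big]\le C\,\mathbb{E}\Big[\Big(\sup_{s\le T}e^{\beta A_s}|Y_s|^2\Big)^{1/2}\Big(\int_0^T e^{\beta A_s}|Z_s|^2ds\Big)^{1/2}\Big]\le \tfrac C2\big(\|Y\|^2_{\mathcal{S}^2_\beta}+\|Z\|^2_{\mathcal{H}^2_\beta}\big)<\infty .
\]
For the jump part, the $\widetilde N^{(k)}$ have no common jumps because their predictable covariations vanish for $k\neq k'$. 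Hence
\[
[N]_T=\sum_{k=1}^d\int_0^T e^{2\beta A_s}|Y_{s-}|^2|V^{(k)}_s|^2\,dN^{(k)}_s .
\]
We apply BDG with $p=1$ and pull out $\sup_s e^{\beta A_s}|Y_{s-}|^2\le \sup_s e^{\beta A_s}|Y_s|^2$, using that $A$ is continuous and increasing. This gives
\[
\mathbb{E}\Big[\sup_{t\le T}|N_t|\Big]\le \tfrac C2\Big(\|Y\|^2_{\mathcal{S}^2_\beta}+\mathbb{E}\Big[\sum_{k=1}^d\int_0^T e^{\beta A_s}|V^{(k)}_s|^2\,dN^{(k)}_s\Big]\Big).
\]
The last expectation equals $\|V\|^2_{\mathcal{V}^2_{\lambda,\beta}}$, because the integrand is nonnegative and predictable and the compensator of $N^{(k)}$ is $\lambda^{(k)}_s\,ds$. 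The identity may be justified by monotone convergence along a localizing sequence.

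Finally, a local martingale $X$ with $\mathbb{E}[\sup_t|X_t|]<\infty$ is a true martingale by dominated convergence along its localizing sequence, and it is uniformly integrable since it is dominated by an integrable variable. As $M_0=N_0=0$, both have zero expectation.

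The only delicate point is the jump integral. Its bracket involves the random measure $dN^{(k)}$ rather than $ds$, so we must pass to the compensator. We must also place the left limit $Y_{s-}$ correctly so that the integrand is predictable; this is exactly why the statement is written with $Y_{s-}$.
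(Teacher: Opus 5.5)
Your proof is correct and follows the same route as the paper: BDG with $p=1$, Young's inequality to split off $\|Y\|^2_{\mathcal{S}^2_\beta}$, and the fact that a local martingale with integrable running supremum is a uniformly integrable martingale (Protter, Ch.~I, Thm.~51). Your handling of the jump term is more careful than the paper's: the paper replaces $d[N^{(k)},N^{(k')}]_s$ by $\delta_{k,k'}\lambda^{(k)}_s\,ds$ inside the square root as an equality, whereas you first apply Young's inequality and only then pass to the compensator for the nonnegative predictable integrand $e^{\beta A_s}|V^{(k)}_s|^2$.
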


\begin{proof}
	For both terms, it suffices to apply the Burkholder-Davis-Gundy inequality (BDG for short; see, e.g., \cite[Ch. IV, Theorem 48]{Protter2005}).
	
	\begin{itemize}
		\item For the Brownian integral $\int_{0}^{\cdot} e^{\beta A_s} Y_s Z_s dB_s$, we have:
		\begin{equation*}
			\begin{split}
				\mathbb{E}\left[\sup_{0 \leq t \leq T} \left|\int_{0}^{t} e^{\beta A_s} Y_s Z_s dB_s\right|\right] 
				&\leq c \mathbb{E}\left[\left(\int_{0}^{T} e^{2\beta A_s} |Y_s|^2 |Z_s|^2 ds \right)^{1/2}\right] \\
				&\leq \frac{1}{8} \| Y \|_{\mathcal{S}^2_{\beta}}^2 + 2c^2 \| Z \|_{\mathcal{H}^2_{\beta}}^2 < +\infty,
			\end{split}
		\end{equation*}
		where $c$ is the universal constant in the BDG inequality.
		
		\item Similarly, for the compensated inhomogeneous Poisson integral $\sum_{k=1}^d \int_{0}^{\cdot} e^{\beta A_s} Y_{s-} V^{(k)}_s d\widetilde{N}^{(k)}_s$, we have:
		\begin{equation*}
			\begin{split}
				&\mathbb{E}\left[\sup_{0 \leq t \leq T} \left|\sum_{k=1}^d \int_{0}^{t} e^{\beta A_s} Y_{s-} V^{(k)}_s d\widetilde{N}^{(k)}_s\right|\right] \\
				&\leq c \mathbb{E}\left[\left(\sum_{k,k'=1}^d \int_{0}^{T} e^{2\beta A_s} |Y_{s-}|^2 V^{(k)}_s V^{(k')}_s d\big[N^{(k)}, N^{(k')}\big]_s \right)^{1/2}\right] \\
				&= c \mathbb{E}\left[\left(\sum_{k=1}^d \int_{0}^{T} e^{2\beta A_s} |Y_{s-}|^2 |V^{(k)}_s|^2 \lambda^{(k)}_s ds \right)^{1/2}\right] \\
				&\leq \frac{1}{8} \| Y \|_{\mathcal{S}^2_{\beta}}^2 + 2c^2 \| V \|_{\mathcal{H}^2_{\lambda,\beta}}^2 < +\infty.
			\end{split}
		\end{equation*}
	\end{itemize}
	
	The above two estimates, together with \cite[Ch. I, Theorem  51]{Protter2005}, complete the proof.
\end{proof}

\begin{proposition}\label{Useful proposition}
	Assume given two $\mathbb{F}$-stopping times $\tau$, $\sigma$ such that $\tau \in \mathcal{T}_0^T$ and $\sigma \in \mathcal{T}_{\tau}^T$. Then, for any $\beta >2$, there exists a constant $\mathfrak{C}_{\beta}>0$ such that,
	\begin{equation*}
		\begin{split}
			&\mathbb{E}^{\mathcal{F}_\tau} \left[ \sup_{\tau \leq t \leq \sigma} e^{\beta A_t} \left| \overline{Y}_t \right|^2  \right]+ \mathbb{E}^{\mathcal{F}_\tau}\left[\int_\tau^\sigma e^{\beta A_s} \left| \overline{Y}_s \right|^2 dA_s \right]\\
			&\qquad+\mathbb{E}^{\mathcal{F}_\tau}\left[\int_\tau^\sigma e^{\beta A_s}  \left|\overline{Z}_s \right|^2 ds \right]+\mathbb{E}^{\mathcal{F}_\tau}\left[\int_\tau^\sigma e^{\beta A_s}   \|\overline{V}_{s}\|^2_\lambda ds \right] \\
			&  \leq \mathfrak{C}_{\beta}\Biggl\{ \mathbb{E}^{\mathcal{F}_\tau}\left[e^{\beta A_\sigma} \left| \overline{Y}_{\sigma} \right|^2\right]+\mathbb{E}^{\mathcal{F}_\tau}\left[\int_{\tau}^\sigma e^{\beta A_s} \left|\dfrac{ \bar{f}(s,Y^{\prime}_s,Z^{\prime}_s,V'_s)}{\alpha_s}\right|^2 ds \right]\\
			&\qquad +\mathbb{E}^{\mathcal{F}_\tau}\left[\int_{\tau}^{\sigma}e^{\beta A_s}\left\{ \bar{L}_{s-}^{+}dK^{+}_s+\bar{L}_{s-}^{-} dK^{\prime,+}_s \right\}\right] 
			+\mathbb{E}^{\mathcal{F}_\tau}\left[\int_{\tau}^{\sigma}e^{\beta A_s}\left\{\bar{U}_{s-}^{-}dK^{-}_s+\bar{U}_{s-}^{+}dK^{\prime,-}_s \right\}\right] \Biggr\}.
		\end{split}
	\end{equation*}
\end{proposition}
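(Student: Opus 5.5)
The estimate is obtained from Itô's formula for the semimartingale $e^{\beta A_t}|\overline{Y}_t|^2$ on the stochastic interval $[t\wedge\sigma,\sigma]$ with $t\in[\tau,\sigma]$. Here $(Y,Z,V,K^{+},K^{-})$ and $(Y',Z',V',K^{\prime,+},K^{\prime,-})$ are solutions of \eqref{basic equation} with data $(\xi,f,L,U)$ and $(\xi',f',L',U')$. We write $\overline{Y}=Y-Y'$ (and similarly for $Z,V,K^\pm,L,U$) and $\bar f=f-f'$. The driver difference splits as
\[
f(s,Y_s,Z_s,V_s)-f'(s,Y'_s,Z'_s,V'_s)=\bigl[f(s,Y_s,Z_s,V_s)-f(s,Y'_s,Z'_s,V'_s)\bigr]+\bar f(s,Y'_s,Z'_s,V'_s).
\]
Itô's formula gives
\[
e^{\beta A_t}|\overline{Y}_t|^2+\beta\int_t^\sigma e^{\beta A_s}|\overline{Y}_s|^2dA_s+\int_t^\sigma e^{\beta A_s}|\overline{Z}_s|^2ds+\sum_{t<s\le\sigma}e^{\beta A_s}|\Delta\overline{Y}_s|^2
\]
\[
=e^{\beta A_\sigma}|\overline{Y}_\sigma|^2+2\int_t^\sigma e^{\beta A_s}\overline{Y}_s\bigl(f-f'\bigr)ds+2\int_t^\sigma e^{\beta A_{s}}\overline{Y}_{s-}d(\overline{K}^+_s-\overline{K}^-_s)-M_\sigma+M_t ,
\]
where $M$ collects the Brownian and compensated Poisson stochastic integrals of Lemma \ref{Lemma 1}. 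The jump sum is bounded below by $\sum_k\int e^{\beta A_s}|\overline{V}^{(k)}_s|^2dN^{(k)}_s$ minus cross terms with $\Delta\overline K^{\pm}$. After taking conditional expectations, the Poisson part of this contributes $\int e^{\beta A_s}\|\overline V_s\|_\lambda^2ds$. The predictable jumps of $K^\pm$ do not occur simultaneously with the totally inaccessible jumps of $\widetilde N$, so those cross terms vanish. Since $A$ is continuous, no jump terms arise from $e^{\beta A}$.

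\textbf{Driver term.} By (H2)(ii),
\[
2\overline{Y}_s(f-f')\le 2\kappa_s|\overline{Y}_s|^2+2\theta_s|\overline{Y}_s||\overline{Z}_s|+2\varrho_s|\overline{Y}_s|\|\overline{V}_s\|_\lambda+2|\overline{Y}_s||\bar f_s| .
\]
Young's inequality bounds the right-hand side by
\[
\bigl(2\kappa_s+2\theta_s^2+2\varrho_s^2+\alpha_s^2\bigr)|\overline{Y}_s|^2+\tfrac12|\overline{Z}_s|^2+\tfrac12\|\overline{V}_s\|_\lambda^2+|\bar f_s/\alpha_s|^2 .
\]
The coefficient of $|\overline{Y}_s|^2$ is at most $3\alpha_s^2$, so the term is absorbed by $\beta\int|\overline{Y}|^2dA$ as soon as $\beta>3$, leaving a positive multiple $(\beta-3)$. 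If one wants exactly $\beta>2$, use the sharper choice $2\theta|y||z|\le 2\theta^2y^2+z^2/2$, which keeps the coefficient at $2\alpha^2$ plus an $\varepsilon\alpha^2$ from the $\bar f$ term. This yields the constraint $\beta>2$ and a constant $\mathfrak C_\beta$ blowing up as $\beta\downarrow2$.

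\textbf{Reflection terms.} This is the step specific to the doubly reflected setting. Write $\overline{Y}_{s-}dK^+_s=(Y_{s-}-L_{s-})dK^+_s+(L_{s-}-L'_{s-})dK^+_s+(L'_{s-}-Y'_{s-})dK^+_s$. By the Skorokhod condition (iii) the first term vanishes, and the third is $\le0$ since $Y'\ge L'$. Hence $\overline{Y}_{s-}dK^+_s\le\bar L^+_{s-}dK^+_s$. Symmetrically, $-\overline{Y}_{s-}dK'^{+}_s\le\bar L^-_{s-}dK'^{+}_s$. The same argument with $U$ gives $-\overline{Y}_{s-}dK^-_s\le\bar U^-_{s-}dK^-_s$ and $\overline{Y}_{s-}dK'^-_s\le\bar U^+_{s-}dK'^-_s$, which produce exactly the barrier terms in the statement.

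\textbf{Conclusion.} Take $t=\tau$ and conditional expectations given $\mathcal F_\tau$; the martingale terms vanish by Lemma \ref{Lemma 1} and optional stopping. This bounds the $dA$, $Z$ and $V$ integrals. For the supremum, return to the pathwise inequality, take $\sup_{\tau\le t\le\sigma}$ and then $\mathbb E^{\mathcal F_\tau}$. The supremum of the stochastic integrals is handled by BDG: first bound it by
\[
c\,\mathbb E^{\mathcal F_\tau}\Bigl[\sup e^{\beta A}|\overline Y|^2\Bigr]^{1/2}\Bigl(\int e^{\beta A}(|\overline Z|^2+\|\overline V\|_\lambda^2)ds\Bigr)^{1/2},
\]
then apply Young to get $\tfrac12\mathbb E[\sup\ldots]+C\,\mathbb E[\int\ldots]$, absorb the first term into the left-hand side, and use the previous bound. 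I expect this BDG-plus-absorption step to be the main obstacle. The absorption requires knowing a priori that the supremum is finite, which holds because both solutions lie in $\mathcal S^2_\beta$. The conditional version of BDG can be justified by applying it on $\mathbf 1_G$ for $G\in\mathcal F_\tau$.
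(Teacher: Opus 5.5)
Your proposal is correct and follows essentially the same route as the paper: It\^o's formula for $e^{\beta A}|\overline{Y}|^2$, the Lipschitz--Young bound giving $(2+\varepsilon)\alpha^2|\overline{Y}|^2$, the Skorokhod-based splitting of the reflection integrals, conditional expectation via Lemma~\ref{Lemma 1}, and BDG with absorption for the supremum. Two small remarks: the improvement from $3\alpha^2$ to $(2+\varepsilon)\alpha^2$ comes from weighting the $\bar f$ term by $\varepsilon$, not from the $\theta$-term split, which you had already made optimally; and your treatment of the jump part of $[\overline{Y},\overline{Y}]$, using that the predictable jumps of $K^{\pm}$ cannot coincide with the totally inaccessible jumps of the $N^{(k)}$, is more careful than the paper's version of It\^o's formula.
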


\begin{proof}
	Let $(Y,Z,V,K^+,K^-)$ and $(Y',Z',V',K^{\prime +},K^{\prime -})$ be two solutions of the DRBSDE \eqref{basic equation} associated with the data $(\xi,f,L,U)$ and $(\xi',f',L',U')$, respectively.\\ 
	For any
	$ 
	\mathcal{R}\in\{Y,Z,V,f,\xi,L,U,K^+,K^-\},
	$ 
	we use the notation
	$
	\overline{\mathcal{R}}:=\mathcal{R}-\mathcal{R}'.
	$

	Using Itô's formula (see, for instance, \cite[Ch II. Theorem 32]{Protter2005}), we can express
	\begin{equation}\label{basic Itos formula}
		\begin{split}
			&e^{\beta A_T}\left|\overline{\xi}\right|^2\\
			&=e^{\beta A_t}\left|\overline{Y}_t\right|^2+\beta\int_{t}^{T}e^{\beta A_s}\left|\overline{Y}_{s}\right|^2 dA_s+2\int_{t}^{T}e^{\beta A_s}\overline{Y}_{s-} d\overline{Y}_s+\int_{t}^{T}e^{\beta A_s}\left|\overline{Y}_{s-}\right|^2 d\big[\overline{Y},\overline{Y}\big]_s\\
			&=e^{\beta A_t}\left|\overline{Y}_t\right|^2+\beta\int_{t}^{T}e^{\beta A_s}\left|\overline{Y}_{s}\right|^2 dA_s-2\int_{t}^{T}e^{\beta A_s}\overline{Y}_{s}\left(f(s,Y_s,Z_s,V_s)-f'(s,Y'_s,Z'_s,V'_s)\right)ds\\
			&\qquad-2\int_{t}^{T}e^{\beta A_s}\overline{Y}_{s-}d\overline{K}^{+}_s+2\int_{t}^{T}e^{\beta A_s}\overline{Y}_{s-}d\overline{K}^{-}_s+2\int_{t}^{T}e^{\beta A_s}\overline{Y}_{s} \overline{Z}_sdB_s
			\\
			&\qquad+\sum_{k=1}^d \int_{t}^{T} e^{\beta A_s} Y_{s-} V^{(k)}_s d\widetilde{N}^{(k)}_s+\int_{t}^{T}e^{\beta A_s} \left|\overline{Z}_s\right|^2_s ds\\
			&\qquad+\sum_{k,k'=1}^d \int_{t}^{T}e^{\beta A_s}\overline{V}^{(k)}_s \overline{V}^{(k')}_s d\big[N^{(k)},N^{(k')}\big]_s
		\end{split}
	\end{equation}
	Next, using assumption \textbf{(H2)}-(ii) and  inequality $2ab \leq \varepsilon a^2+\frac{1}{\varepsilon}b^2$, for any $\varepsilon >0$, we get
	\begin{equation}
		\begin{split}
			2\overline{Y}_s (f(s,Y_s,Z_s,V_s)-f(s,Y^{\prime}_s,Z^{\prime}_s,V'_s))
			&\leq 2\kappa_s \left|\overline{Y}_s \right|^2+2 \gamma_s \left|\overline{Y}_s \right| \left| \overline{Z}_s\right|2+2 \varrho_s \left| \overline{Y}_s \right| \left\| \overline{V}_s \circ \sqrt{\lambda_s} \right\|_{\ell^2}\\
			& \leq 2 \alpha^2_s  \left| \overline{Y}_s \right|^2+\frac{1}{2}\left| \overline{Z}_s \right|^2+\frac{1}{2}\left\| \overline{V}_s \circ \sqrt{\lambda_s} \right\|^2_{\ell^2}
		\end{split}
		\label{ready to go 1}
	\end{equation}
	On the other hand,
	\begin{equation}
		\begin{split}
			2\overline{Y}_s \bar{f}(s,Y^{\prime}_s,Z^{\prime}_s,V'_s)-f^{\prime}(s,Y^{\prime}_s,Z^{\prime}_s,V'_s)&\leq 2\dfrac{\left| \bar{f}(s,Y^{\prime}_s,Z^{\prime}_s,V'_s)\right| }{\alpha_s}\alpha_s \left| \overline{Y}_s\right|\\
			& \leq  \varepsilon \alpha^2_s  \left| \overline{Y}_s\right|^2+\dfrac{1}{\varepsilon}\left| \dfrac{ \bar{f}(s,Y^{\prime}_s,Z^{\prime}_s,V'_s) }{\alpha_s}\right|^2
		\end{split}
		\label{ready to go 2}
	\end{equation}
	Furthermore, thanks to Skorokhod's condition (\ref{basic equation})-(iii), we obtain
	\begin{equation}
		\begin{split}
			&\int_{t}^{\sigma} e^{\beta A_s} \overline{Y}_{s-}d\overline{K}^{+}_s\\
			&=\int_{t}^{\sigma} e^{\beta A_s} \left(Y_{s-}-Y^{\prime}_{s-}\right)\left(dK^{+}_s-dK^{\prime,+}_s\right)\\
			&=\int_{t}^{\sigma} e^{\beta A_s} \left(Y_{s-}-L_{s-}\right)dK^{+}_s+\int_{t}^{\sigma} e^{\beta A_s} \left(L_{s-}-Y_{s-}\right)dK^{\prime,+}_s\\
			&\qquad+\int_{t}^{\sigma} e^{\beta A_s} \left(L^{\prime}_{s-}-Y^{\prime}_{s-}\right)dK^{+}_s+\int_{t}^{\sigma} e^{\beta A_s} \left(Y^{\prime}_{s-}-L^{\prime}_{s-}\right)dK^{\prime,+}_s+\int_{t}^{\sigma} e^{\beta A_s} \bar{L}_{s-}d\bar{K}^+_s \\
			&\leq \int_{t}^{\sigma} e^{\beta A_s} \bar{L}_{s-}d\bar{K}^+_s. 
		\end{split}
		\label{L minoration to get uniqueness}
	\end{equation}
	Similarly, we may show that
	\begin{equation}
		\int_{t}^{\sigma} e^{\beta A_s} \overline{Y}_{s-}d\bar{K}^-_s\geq \int_{t}^{\sigma} e^{\beta A_s} \bar{U}_{s-}d\bar{K}^-_s,~\mathbb{P}\text{-a.s.}
		\label{U majoration for uniq}
	\end{equation}
	Plugging (\ref{ready to go 1}), (\ref{ready to go 2}), (\ref{L minoration to get uniqueness}) and (\ref{U majoration for uniq}) into (\ref{basic Itos formula}), we obtain, after taking the conditional expectation on both sides along with Lemma \ref{Lemma 1},
	\begin{equation}
		\begin{split}
			&e^{\beta A_t} \left| \overline{Y}_{t} \right|^2+(\beta-\left(\varepsilon+2 \right) ) \mathbb{E}^{\mathcal{F}_t}\left[\int_t^\sigma e^{\beta A_s} \left| \overline{Y}_{s} \right|^2 dA_s \right]\\
			&\qquad+\dfrac{1}{2}\mathbb{E}^{\mathcal{F}_t}\left[\int_t^\sigma e^{\beta A_s}  \left| \overline{Z}_{s} \right|^2 ds \right]+\dfrac{1}{2}\mathbb{E}^{\mathcal{F}_t}\left[\int_t^\sigma e^{\beta A_s}   \|\overline{V}_{s}\|^2_\lambda ds \right]\\
			& \leq\mathbb{E}^{\mathcal{F}_t}\left[e^{\beta A_\sigma} \left| \overline{Y}_{\sigma} \right|^2\right]+\dfrac{1}{\theta}\mathbb{E}^{\mathcal{F}_t}\left[\int_{t}^\sigma e^{\beta A_s} \left|\dfrac{ \overline{f}(s,Y^{\prime}_s,Z^{\prime}_s,V'_s) }{\alpha_s}\right|^2 ds \right]\\
			&\qquad+2\mathbb{E}^{\mathcal{F}_t}\left[\int_{t}^{\sigma}e^{\beta A_s}\left\{ \bar{L}_{s-}^{+}dK^{+}_s+\bar{L}_{s-}^{-}dK^{\prime,+}_s \right\}\right]
			+2 \mathbb{E}^{\mathcal{F}_t}\left[\int_{t}^{\sigma}e^{\beta A_s}\left\{\bar{U}^{-}_{s-}dK^{-}_s+\bar{U}_{s-}^{+}dK^{\prime,-}_s \right\}\right],
		\end{split}
	\end{equation}
	Choosing $\varepsilon>0$ such that $\beta >\varepsilon+2$ and taking conditional expectation with respect to $\mathcal{F}_{\tau}$, we obtain
	\begin{equation}
		\begin{split}
			&\sup_{\tau \leq t \leq \sigma}\mathbb{E}^{\mathcal{F}_\tau}\left[e^{\beta A_t} \left| \overline{Y}_t \right|^2\right]+ \mathbb{E}^{\mathcal{F}_\tau}\left[\int_\tau^\sigma e^{\beta A_s}  \left| \overline{Y}_s \right|^2 dA_s \right]\\
			& \qquad+\mathbb{E}^{\mathcal{F}_\tau}\left[\int_\tau^\sigma e^{\beta A_s}  \left| \overline{Z}_s\right|^2 ds \right]+\mathbb{E}^{\mathcal{F}_\tau}\left[\int_\tau^\sigma e^{\beta A_s}   \|\overline{V}_{s}\|^2_\lambda ds \right]\\
			& \leq \mathfrak{C}_{\beta}\Biggl\{ \mathbb{E}^{\mathcal{F}_\tau}\left[e^{\beta A_\sigma} \left| \overline{Y}_{\sigma} \right|^2\right]+\mathbb{E}^{\mathcal{F}_\tau}\left[\int_{\tau}^\sigma e^{\beta A_s} \left|\dfrac{ \overline{f}(s,Y^{\prime}_s,Z^{\prime}_s,V'_s)}{\alpha_s}\right|^2 ds \right]\\
			&\qquad +\mathbb{E}^{\mathcal{F}_\tau}\left[\int_{\tau}^{\sigma}e^{\beta A_s}\left\{ \bar{L}_{s-}^{+}dK^{+}_s+\bar{L}_{s-}^{-}dK^{\prime,+}_s \right\}\right] 
			+\mathbb{E}^{\mathcal{F}_\tau}\left[\int_{\tau}^{\sigma}e^{\beta A_s}\left\{\bar{U}^{-}_{s-}dK^{-}_s+\bar{U}_{s-}^{+}dK^{\prime,-}_s  \right\}\right]\Biggr\}.
		\end{split}
		\label{Z,N estimates}
	\end{equation}
	Finally, using once again the BDG inequality as in Lemma \ref{Lemma 1} along with the above estimation, we derive
	\begin{equation}
		\begin{split}
			&\mathbb{E}^{\mathcal{F}_\tau} \left[ \sup_{\tau \leq t \leq \sigma} e^{\beta A_t} \left| \overline{Y}_t \right|^2  \right] \\
			&  \leq \mathfrak C_{\beta}\Biggl\{ \mathbb{E}^{\mathcal{F}_\tau}\left[e^{\beta A_\sigma} \left| \overline{Y}_{\sigma} \right|^2\right]+\mathbb{E}^{\mathcal{F}_\tau}\left[\int_{\tau}^\sigma e^{\beta A_s} \left|\dfrac{ \bar{f}(s,Y^{\prime}_s,Z^{\prime}_s,V'_s)}{\alpha_s}\right|^2 ds \right]\\
			&\qquad +\mathbb{E}^{\mathcal{F}_\tau}\left[\int_{\tau}^{\sigma}e^{\beta A_s}\left\{ \bar{L}_{s-}^{+}dK^{+}_s+\bar{L}_{s-}^{-} dK^{\prime,+}_s \right\}\right] 
			+\mathbb{E}^{\mathcal{F}_\tau}\left[\int_{\tau}^{\sigma}e^{\beta A_s}\left\{\bar{U}_{s-}^{-}dK^{-}_s+\bar{U}_{s-}^{+}dK^{\prime,-}_s \right\}\right] \Biggr\}.
		\end{split}
	\end{equation}
	
	The result of Proposition \ref{Useful proposition} is achieved.
\end{proof}

From Proposition \ref{Useful proposition}, we derive the following uniqueness result.
\begin{corollary}\label{Uniq}
	Assume \textbf{(H1)}--\textbf{(H3)}. Then, there exists at most one solution $(Y,Z,V,K^{+},K^{-})$ of the DRBSDE (\ref{basic equation}) associated with $(\xi,f,L,U)$. 
\end{corollary}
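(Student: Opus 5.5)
Let $(Y,Z,V,K^{+},K^{-})$ and $(Y',Z',V',K^{\prime,+},K^{\prime,-})$ be two solutions in $\mathfrak{D}^2_\beta$ of the DRBSDE \eqref{basic equation} with the same data $(\xi,f,L,U)$. We apply Proposition \ref{Useful proposition} with $\tau=0$ and $\sigma=T$, taking $\beta>2$ as required there; the uniqueness class is $\mathfrak{D}^2_\beta$ for this fixed $\beta$. In the notation $\overline{\mathcal{R}}=\mathcal{R}-\mathcal{R}'$, every data term on the right-hand side vanishes. First, $\overline{Y}_T=\xi-\xi=0$. Second, $\bar f(s,Y'_s,Z'_s,V'_s)=f(s,Y'_s,Z'_s,V'_s)-f(s,Y'_s,Z'_s,V'_s)=0$. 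Third, $\bar L\equiv 0$ and $\bar U\equiv 0$, so $\bar L^{\pm}_{s-}=\bar U^{\pm}_{s-}=0$ and the four reflection integrals against $dK^{\pm}$ and $dK^{\prime,\pm}$ are zero. Since $\mathcal{F}_0$ is trivial up to null sets, the conditional expectations become ordinary expectations.

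The proposition therefore gives
\[
\mathbb{E}\Big[\sup_{0\le t\le T}e^{\beta A_t}|\overline{Y}_t|^2\Big]+\mathbb{E}\Big[\int_0^T e^{\beta A_s}|\overline{Z}_s|^2ds\Big]+\mathbb{E}\Big[\int_0^T e^{\beta A_s}\|\overline{V}_s\|^2_\lambda ds\Big]\le 0 .
\]
Because $e^{\beta A_t}\ge 1$, this yields $Y=Y'$ up to indistinguishability. It also yields $Z=Z'$ $d\mathbb{P}\otimes dt$-a.e. For the jump part it gives $V^{(k)}=V^{\prime(k)}$ $d\mathbb{P}\otimes\lambda^{(k)}_tdt$-a.e., which is equality in $\mathcal{V}^2_\lambda$ since $\lambda$ is positive.

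It remains to identify the reflecting processes. Subtract the two equations \eqref{basic equation}-(i). The stochastic integrals cancel, since $\int_t^T\overline{Z}_s dB_s$ and $\int_t^T\overline{V}^{(k)}_s d\widetilde{N}^{(k)}_s$ vanish a.s. by the isometry. The driver terms also cancel, because the arguments coincide. This leaves $K^+_t-K^-_t=K^{\prime,+}_t-K^{\prime,-}_t$ for all $t$, a.s., i.e. $\overline{K}^+=\overline{K}^-$.

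The main obstacle is splitting this difference into its two components. For that we use the complete separation \textbf{(H3)}-(iii) together with the Skorokhod conditions. By \eqref{basic equation}-(iii), $dK^+$ and $dK^{\prime,+}$ are carried by $\{Y_{t-}=L_{t-}\}$, and $dK^-$ and $dK^{\prime,-}$ are carried by $\{Y_{t-}=U_{t-}\}$; here we use $Y=Y'$. Since $L_{t-}<U_{t-}$, these two predictable sets are disjoint. Write $D:=\{Y_{t-}=L_{t-}\}$ and integrate $\mathds{1}_{D}$ against both sides of $\overline{K}^+=\overline{K}^-$. The left side is unchanged, giving $\overline{K}^+$, while the right side gives $\int \mathds{1}_D\, d\overline{K}^-=0$. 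Hence $K^+=K^{\prime,+}$, and then $K^-=K^{\prime,-}$. Alternatively, one can argue through the Jordan decomposition, using that two measures carried by disjoint sets are mutually singular, so the minimal decomposition of the finite-variation process $K^+-K^-$ is unique.
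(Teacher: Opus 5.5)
Your proof is correct. The first half (applying Proposition \ref{Useful proposition} with $\tau=0$, $\sigma=T$ and noting that every data difference vanishes) is exactly the paper's argument. You do not even need triviality of $\mathcal{F}_0$, since taking expectations of the conditional inequality suffices.

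The route differs in how the reflecting processes are identified. The paper splits $K^{\pm}=K^{\pm,c}+K^{\pm,d}$ and matches the jump parts through the explicit formulas for $\Delta K^{\pm}$ in Remark \ref{Reflection remark}. It then matches the continuous parts via $(U-L)(dK^{c,\pm}-dK^{\prime c,\pm})=0$ and $U-L>0$. That step tacitly uses the identity $K^{+}-K^{-}=K^{\prime +}-K^{\prime -}$, which you derive explicitly by subtracting the two equations.

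You instead observe that the Skorokhod conditions force $dK^{+},dK^{\prime +}$ to be carried by the predictable set $\{Y_{t-}=L_{t-}\}$, and $dK^{-},dK^{\prime -}$ by $\{Y_{t-}=U_{t-}\}$. These sets are disjoint because $L_{t-}<U_{t-}$. Integrating $\mathds{1}_{\{Y_{t-}=L_{t-}\}}$ against $\bar K^{+}=\bar K^{-}$ then treats continuous and jump parts at once, pathwise; equivalently, you can invoke uniqueness of the Jordan decomposition of $K^{+}-K^{-}$.

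Your version is shorter and more self-contained. It uses only the Skorokhod conditions as written and the separation of the left limits. It does not depend on the jump description of Remark \ref{Reflection remark}, whose justification requires separating the predictable jumps of $K^{\pm}$ from the totally inaccessible jumps of the Poisson integrals. The paper's route, in exchange, leans on that description, which it needs anyway later (Proposition \ref{Continuity of reflection processes}).

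As in the paper, your argument requires $\beta>2$ in order to invoke the proposition. It also reads \textbf{(H3)}-(iii) as $L_{t-}<U_{t-}$ for all $t\in(0,T]$, including $t=T$, since $dK^{\pm}$ may charge the terminal time.
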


\begin{proof}
	Let $(Y,Z,V,K^+,K^-)$ and $(Y',Z',V',K^{\prime +},K^{\prime -})$ be two solutions of the DRBSDE \eqref{basic equation} associated with the same data $(\xi,f,L,U)$.
	
	By the a priori estimates established in Proposition \ref{Useful proposition}, we obtain
	\[
	\begin{aligned}
		&\mathbb{E}\left[
		\sup_{0\leq t\leq T}
		e^{\beta A_t}|\overline{Y}_t|^2
		\right]
		+
		\mathbb{E}\left[
		\int_0^T e^{\beta A_s}
		|Z_s-Z'_s|^2\,ds
		\right] \\
		&\qquad
		+
		\mathbb{E}\left[
		\int_0^T e^{\beta A_s}
		\|V_s-V'_s\|_\lambda^2\,ds
		\right]
		=0.
	\end{aligned}
	\]
	Consequently,
	\[
	(Y,Z,V)=(Y',Z',V').
	\]
	
	Next, by the second item of Remark \ref{Reflection remark}, the expressions of
	$K^{\pm,d}$ and $K^{\prime\pm,d}$ in terms of $Y$ and $Y'$ imply that
	\[
	K^{\pm,d}=K^{\prime\pm,d}.
	\]
	For the continuous parts, the complete separation assumption
	\textbf{(H3)}-(iii), together with the Skorokhod conditions, yields
	\[
	(U-L)\bigl(dK^{c,\pm}-dK^{\prime c,\pm}\bigr)=0.
	\]
	Since $U-L>0$, it follows that
	\[
	K^{c,\pm}=K^{\prime c,\pm}.
	\]
	Therefore,
	\[
	K^\pm=K^{\prime\pm},
	\]
	which completes the proof.
\end{proof}

\subsection{Existence via Penalization Approximation}
In this section, we establish the existence of solutions for the DRBSDE \eqref{basic equation} using the penalization method. This technique not only proves existence but also provides an approximation of the solution via a sequence of standard BSDEs of a particular form.\\

The proof is divided into two main steps. First, we treat the case where the driver $f$ is independent of $(y,z,v)$, that is, $f(\omega,t,y,z,v) = g(\omega,t)$ for any $(t,y,z,v) \in [0,T] \times \mathbb{R}^2 \times \mathcal{V}^2_\lambda$, $\mathbb{P}$-a.s. In this setting, we have $\frac{g(\cdot)}{\alpha} \in \mathcal{H}^2_\beta$ from \textbf{(H2)}-(iii), then we construct a sequence of penalized equations whose solutions converge to the solution of the DRBSDE (\ref{basic equation}) associated with the data $(\xi, g, L, U)$. The general case will then be handled using a fixed-point argument in a suitable Banach space.

\subsubsection{Case where the driver $f$ does not depend on $(y, z, v)$}
In this first part, we prove the following result:
\begin{theorem}
	Suppose that assumptions \textbf{(H1)}--\textbf{(H4)} hold for a sufficiently large $\beta > 0$. Then, the DRBSDE (\ref{basic equation}) associated with the data $(\xi, g, L, U)$ admits a unique solution $\left(Y_t, Z_t, V_t, K^{+}_t, K^{-}_t\right)_{t \leq T}$ in the space $\mathfrak{D}^{2}_{\beta}$.
	\label{Existence and uniquenss: g}
\end{theorem}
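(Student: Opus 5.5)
Uniqueness is already given by Corollary \ref{Uniq}, so only existence needs proof. I would obtain it by double penalization. For each $n\in\mathbb{N}$, consider the standard BSDE with jumps
\begin{equation*}
Y^n_t=\xi+\int_t^T g(s)\,ds+n\int_t^T (Y^n_s-L_s)^-\,ds-n\int_t^T (U_s-Y^n_s)^-\,ds-\int_t^T Z^n_s\,dB_s-\sum_{k=1}^d\int_t^T V^{n,(k)}_s\,d\widetilde N^{(k)}_s .
\end{equation*}
Its driver is uniformly Lipschitz in $y$ with constant $2n$ and does not depend on $(z,v)$. Hence the predictable representation property, combined with a Picard iteration in the $\beta$-weighted norms, gives a unique solution $(Y^n,Z^n,V^n)\in\mathfrak{B}^2_\beta$. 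Set
\begin{equation*}
K^{n,+}_t=n\int_0^t (Y^n_s-L_s)^-\,ds,\qquad K^{n,-}_t=n\int_0^t (U_s-Y^n_s)^-\,ds .
\end{equation*}
By the comparison theorem for such BSDEs, which follows from an Itô--Tanaka argument on $(Y^{n}-Y^{n+1})^+$, $Y^n$ is monotone in the $L$-penalization. I would therefore run the scheme as a double sequence $Y^{n,m}$ (increasing in $n$, decreasing in $m$) and pass to the limit along the diagonal.

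The key step is a bound on $\mathbb{E}[(K^{n,+}_T)^2+(K^{n,-}_T)^2]$ that is uniform in $n$. This is where \textbf{(H4)} enters. I compare $Y^n$ with the semimartingale $\mathsf{Y}$, which lies between $L$ and $U$ and whose finite variation part is $-\mathsf{K}^++\mathsf{K}^-$ with square integrable components. From the comparison I expect to show
\begin{equation*}
\mathbb{E}\int_0^T (Y^n_s-L_s)^-\,dK^{n,+}_s\to0 ,
\end{equation*}
together with the analogous statement at $U$. Itô's formula applied to $e^{\beta A}|Y^n-\mathsf{Y}|^2$ then controls $K^{n,\pm}_T$ by $\mathsf{K}^\pm_T$, $\|g/\alpha\|_{\mathcal{H}^2_\beta}$ and $\|\xi\|$. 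Because the $e^{\beta A}$ weights are unbounded, $\beta$ must be chosen large enough to absorb the $\alpha^2$ terms, as in Proposition \ref{Useful proposition}; this is the reason for the hypothesis "sufficiently large $\beta$". An Itô computation on $e^{\beta A}|Y^n|^2$ with the inequalities $2ab\le\varepsilon a^2+\varepsilon^{-1}b^2$ then yields uniform bounds on $(Y^n,Z^n,V^n)$ in $\mathfrak{B}^2_\beta$. I expect this step to be the main obstacle. The cross term $\int (Y^n_{s-}-\mathsf{Y}_{s-})\,d(K^{n,+}-K^{n,-})$ has to be handled using $L\le\mathsf{Y}\le U$ and the sign of $(Y^n-L)^-$. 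The jumps of $L$ and $U$ add a further difficulty, since the limiting $K^\pm$ will only be predictable RCLL and not continuous.

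Next I prove convergence. Monotonicity gives a pointwise limit $Y$, and the uniform bounds with dominated convergence give convergence of $Y^n$ in $\mathcal{S}^{2,\alpha}_\beta$. I then apply Itô's formula to $e^{\beta A}|Y^n-Y^p|^2$. The cross terms such as $(Y^n-Y^p)\,d(K^{n,+}-K^{p,+})$ are bounded by
\begin{equation*}
\int (Y^p-L)^-\,dK^{n,+}+\int (Y^n-L)^-\,dK^{p,+},
\end{equation*}
and these tend to $0$ by the bounds of the previous step, together with $\mathbb{E}[\sup_t|(Y^n_t-L_t)^-|^2]\to0$. That last fact follows from a Dini/section-theorem argument, as in Hamadène--Ouknine style proofs, and the complete separation \textbf{(H3)}-(iii) is used there so that the two penalizations do not interact. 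This gives Cauchy sequences for $Z^n$ and $V^n$ in $\mathcal{H}^2_\beta\times\mathcal{V}^2_{\lambda,\beta}$. Consequently $K^{n,+}-K^{n,-}$ converges in probability uniformly on $[0,T]$, and each $K^{n,\pm}$ converges weakly in $\mathcal{K}^2$ to a predictable increasing limit $K^\pm$.

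Finally I pass to the limit in the equation to get (i). Condition (ii) follows from $(Y^n-L)^-\to0$ and $(U-Y^n)^-\to0$. For the Skorokhod condition (iii), I use the weak convergence of $K^{n,\pm}$ to $K^\pm$ together with the convergence of $Y^n$, for instance via Lemma 2.2 of Hamadène--Wang or its RCLL version in \cite{ElmansouriOtmani2024}, to get
\begin{equation*}
\int_0^T (Y_{s-}-L_{s-})\,dK^+_s=0 ,
\end{equation*}
and likewise at $U$. Combined with Corollary \ref{Uniq}, this proves the theorem.
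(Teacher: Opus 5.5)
Your architecture (one penalization index for both barriers, uniform bounds from \textbf{(H4)}, It\^o's formula for $Y^n-Y^p$ with cross terms dominated by $\sup_t(Y^p_t-L_t)^-\,K^{n,+}_T$ and its analogues, weak limits of $K^{n,\pm}$) is the paper's. Its pivot, however, fails. The convergence \eqref{CVV}, and with it the uniform convergence of $Y^n$ and of $K^{n,+}-K^{n,-}$ that you claim, is false in the generality you yourself describe.

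In this filtration the martingales jump only at totally inaccessible times, and $K^{n,\pm}$ are absolutely continuous, so $Y^n$ has no predictable jumps. A uniform limit then has none either. That forces $K^{\pm}$ to be continuous, since they cannot jump simultaneously because $L_{t-}<U_{t-}$. This contradicts your correct remark that $K^+$ jumps where $L$ has negative predictable jumps.

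Concretely, take $T=2$, $g=0$, $\xi=0$, $U\equiv 10$ and $L=\mathds{1}_{[0,1)}-\mathds{1}_{[1,2]}$. Then \textbf{(H1)}--\textbf{(H4)} hold with $\mathsf{Y}_t=1\wedge(2-t)$. The solution is $Y=\mathds{1}_{[0,1)}$, $K^+=\mathds{1}_{[1,2]}$, $K^-=0$, and the penalized solutions are deterministic:
\begin{equation*}
Y^n_t=\bigl(1-e^{-n(1-t)}\bigr)\mathds{1}_{[0,1)}(t),\qquad \sup_{0\le t\le T}\bigl(Y^n_t-L_t\bigr)^-=1\quad\text{for every } n .
\end{equation*}
Moreover $\sup_t|Y^{2p}_t-Y^p_t|=1/4$ for every $p\ge1$.

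The paper's sketch imports \eqref{CVV} from the literature in the same way, so on this point you are following the paper. Still, the argument as written only covers the case where $K^\pm$ are continuous, i.e.\ $L$ has no negative and $U$ no positive predictable jumps (cf.\ Remark \ref{Remark continuity}). For general RCLL barriers you must give up uniform convergence. Work instead with a weaker convergence of $Y^n$ and with weak convergence of $K^{n,\pm}_\vartheta$ at stopping times. Then identify the limit and the Skorokhod conditions through the Snell-envelope characterization, as the last part of the paper's proof does, rather than through a Hamad\`ene--Wang-type limit lemma, which needs uniform convergence.

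Two further steps need repair. First, the monotonicity detour fails. With the same $n$ in both penalty terms, $Y^n$ is not monotone, and neither is the diagonal $Y^{n,n}$ of a doubly indexed scheme: it increases where the lower penalty acts and decreases where the upper one acts. So neither ``monotonicity gives a pointwise limit'' nor a Dini argument applies to the sequence you actually use. When $K^\pm$ are continuous, \eqref{CVV} is obtained by bounding $Y^n$ from below by the solution of the BSDE reflected at $U$ and penalized at $L$, which does increase in $n$, and symmetrically from above.

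Second, consider It\^o's formula for $|Y^n-\mathsf{Y}|^2$. Its cross terms have the good sign, because $dK^{n,+}$ charges only $\{Y^n<L\le\mathsf{Y}\}$ and $dK^{n,-}$ only $\{Y^n>U\ge\mathsf{Y}\}$. It therefore bounds $(Y^n,Z^n,V^n)$, and hence $K^{n,+}_T-K^{n,-}_T$, but not $K^{n,+}_T$ and $K^{n,-}_T$ separately. Nor is $\mathbb{E}\int_0^T(Y^n_s-L_s)^-\,dK^{n,+}_s\to0$ a route to such a bound: in the example above this quantity equals $\frac12(1-e^{-2n})$. What works is Tanaka's formula for $(Y^n-\mathsf{Y})^-$, using the same support properties. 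It gives
\begin{equation*}
K^{n,+}_T\le\int_0^T|g_s|\,ds+\mathsf{K}^+_T+\int_0^T\mathds{1}_{\{Y^n_{s-}\le\mathsf{Y}_{s-}\}}\Bigl((Z^n_s-\mathsf{Z}_s)\,dB_s+\sum_{k=1}^d\bigl(V^{n,(k)}_s-\mathsf{V}^{(k)}_s\bigr)\,d\widetilde N^{(k)}_s\Bigr),
\end{equation*}
and symmetrically for $K^{n,-}$ with $(Y^n-\mathsf{Y})^+$, hence the separate bounds. Carry out these comparisons with $\mathsf{Y}$ without the weight $e^{\beta A}$, since \textbf{(H4)} only gives unweighted integrability of $\mathsf{Z}$, $\mathsf{V}$, $\mathsf{K}^\pm$. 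Control the driver through $\bigl(\int_0^T|g_s|\,ds\bigr)^2\le\beta^{-1}\int_0^T e^{\beta A_s}|g_s/\alpha_s|^2\,ds$.
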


The uniqueness has already been proved in Corollary \ref{Uniq}.\\
The existence result for the DRBSDE (\ref{basic equation}) associated with $(\xi, g, L, U)$ via the penalization method follows by adapting the arguments from \cite[Theorem 4.1]{ElmansouriOtmani2024}. For conciseness, we present only the main steps of the proof and refer the reader to \cite{ElmansouriOtmani2024} for complete details.

\begin{proof}
	We consider the following penalization approximation schemes with respect to the two reflecting barriers $L$ and $U$, defined for each $n \in \mathbb{N}$ as follows:
	\begin{equation}
		\begin{split}
			Y^{n}_t= &\xi+\int_t^T g(s)ds+n\int_t^T (L_s-Y^{n}_s)^{+}ds  -n \int_t^T (Y^{n}_s-U_s)^{+}ds\\
			&\qquad  -\int_t^T Z^{n}_s d B_s-\sum_{k=1}^d\int_{t }^{T}V^{n,(k)}_s d \widetilde{N}^{(k)}_s, \quad t \in [0, T].\\
		\end{split}
		\label{penalization equations with respect to the two reflection barriers}
	\end{equation}
	We denote $K^{n,+}_t := n\int_0^t (L_s - Y^{n}_s)^{+} ds$, $K^{n,-}_t := n\int_0^t (Y^{n}_s - U_s)^{+} ds$, and $f_n(s,y) := g(s) + n\left(L_s - y\right)^{+} - n\left(y - U_s\right)^{+}$.\\ 
	Under assumptions \textbf{(H2)}-(ii)-(iii) and \textbf{(H3)}-(ii), we have
	\begin{equation*}
		\begin{split}
			&\mathbb{E}\left[\int_{0}^{T} e^{\beta A_s} \left|\frac{f_n(s,0)}{\alpha_s}\right|^2 ds \right]\\
			&\leq 3\left(\mathbb{E}\left[\int_{0}^{T} e^{\beta A_s} \left|\frac{g(s)}{\alpha_s}\right|^2 ds \right] + T\left\{ \mathbb{E}\left[ \sup_{0 \leq t \leq T} \left| e^{\beta A_t} L^{+}_t \right|^2 \right] + \mathbb{E}\left[ \sup_{0 \leq t \leq T} \left| e^{\beta A_t} U^{-}_t \right|^2 \right] \right\} \right)\\
			&< +\infty.
		\end{split}
	\end{equation*}
	Moreover, the driver $f_n$ is a $2n$-Lipschitz mapping. Then, using \cite[Theorem 3.5]{JamaliOtmani2019}, \cite[Appendix A]{ElmansouriOtmani2024}, or \cite[Theorem 9]{ElJamaliOtmani2022}, we deduce that there exists a unique triplet of processes $\left(Y^n_t, Z^n_t, V^n_t\right)_{t \leq T}$ belonging to $\mathfrak{B}^2_{\beta}$ that satisfies the classical BSDE (\ref{penalization equations with respect to the two reflection barriers}) associated with parameters $(\xi, f_n)$.\\
	Next, using assumption \textbf{(H4)} along with a localization procedure, as in the proof of Lemma 4.1 in \cite{ElmansouriOtmani2024} or Step 2 of the proof of Theorem 3 in \cite{MarzougueOtmani2021}, we can derive uniform \textit{a priori} estimates for the sequence of processes $\{(Y^n, Z^n, V^n, K^{n,+}, K^{n,-})\}_{n \in \mathbb{N}}$, ensured by the existence of a constant $\mathfrak{C}_{\beta,T}$ such that
	\begin{equation}\label{UI}
		\begin{split}
			&\left\| Y^n\right\|^2_{\mathcal{S}^2_{\beta}}   +\left\| Y^n\right\|^2_{\mathcal{S}^{2,\alpha}_\beta}    +\left\| Z^n\right\|^2_{\mathcal{H}^2_{\beta}} +\left\| V^n \right\|^2_{\mathcal{V}^2_{\gamma,\beta}} +\left\| K^{n,+}_T \right\|^2_{\mathcal{K}^2}+\left\| K^{n,-}_T \right\|^2_{\mathcal{K}^2} \\
			&\leq \mathfrak{C}_{\beta,T} \Biggl\{\left\| \xi\right\|^2_{\mathcal{L}^2_{\beta}(\mathcal{G}_T)}  +\left\| \dfrac{g(\cdot)}{\alpha_{\cdot}}\right\|^2_{\mathcal{H}^2_{\beta}} + \left\| L^{+}\right\|^2_{\mathcal{S}^2_{2\beta}}+ \left\| U^{-}\right\|^2_{\mathcal{S}^2_{2\beta}}+\left\| \mathsf{Z}\right\|^2_{\mathcal{H}^2} +\left\| \mathsf{V}\right\|^2_{\mathcal{V}^2_\gamma}\\
			&\qquad\qquad\qquad+\left\| \mathsf{K}^{+}_T \right\|^2_{\mathcal{K}^2}+\left\| \mathsf{K}^{-}_T \right\|^2_{\mathcal{K}^2} \Biggr\}.
		\end{split}
	\end{equation}
	
	We next aim to prove that the sequence $\{(Y^n, Z^n, V^n, K^n := K^{n,+} - K^{n,-})\}_{n \in \mathbb{N}}$ is a Cauchy sequence in the space $\mathcal{S}^2 \times \mathcal{H}^2 \times \mathcal{V}^2_\gamma \times \mathcal{K}^2$. To this end, we employ the following convergence result:
	\begin{equation}\label{CVV}
		\lim\limits_{n \rightarrow +\infty}\mathbb{E}\left[\sup_{0 \leq t \leq T} \left| \left(Y^n_t - L_t\right)^{-} \right|^2 + \sup_{0 \leq t \leq T} \left| \left(Y^n_t - U_t\right)^{+} \right|^2 \right] = 0.
	\end{equation}
	Note that this convergence result is established in the Brownian setting in \cite[Lemma 3.3]{LiShi2016}, in the proof of Step 3 of Theorem 3 in \cite{MarzougueOtmani2021} for the particular case of a homogeneous Poisson process, in Step 3 of the proof of Theorem 4.1 in \cite{OtmaniJamaliMarzougue2022} for the case of a Lévy process, and is also detailed in the proof of Step 3 of Theorem 4.1 in \cite{ElmansouriOtmani2024} for the general case of a given RCLL martingale in a general filtration.
	
	Next, for each $n \geq p \geq 0$, applying It\^{o}'s formula as in Proposition \ref{Useful proposition} implies that
	\begin{equation*}
		\begin{split}
			&\mathbb{E}\left[\left|Y^n_t-Y^p_t \right|^2 \right]+\mathbb{E}\left[\int_{t}^{T}\left|Z^n_s-Z^p_s \right|^2 ds \right]+\mathbb{E}\left[\int_{t}^{T} \|V^n_s-V^p_s\|^2_\lambda ds   \right]\\
			&\leq 2\mathbb{E}\left[\int_{t}^{T} \left(Y^n_{s-}-Y^p_{s-}\right)\left(dK^{n,+}_s-dK^{p,+}_s\right) \right]-2\mathbb{E}\left[\int_{t}^{T} \left(Y^n_{s-}-Y^p_{s-}\right)\left(dK^{n,-}_s-dK^{p,-}_s\right)\right] \\
			&\leq 2\mathbb{E}\left[\sup_{0  \leq t \leq  T} \left|\left(  Y^n_t-L_t \right)^{-}  \right| K^{p,+}_T \right]+2\mathbb{E}\left[\sup_{0  \leq t \leq  T} \left|\left(  Y^p_t-L_t \right)^{-}  \right| K^{n,+}_T \right]\\
			&+2\mathbb{E}\left[\sup_{0  \leq t \leq  T} \left|\left(  Y^n_t-U_t \right)^{+}  \right| K^{p,-}_T \right]+2\mathbb{E}\left[\sup_{0  \leq t \leq  T} \left|\left(  Y^p_t-U_t \right)^{-}  \right| K^{n,-}_T \right].
		\end{split}
	\end{equation*}
	Using the uniform estimate (\ref{UI}) and the convergence result (\ref{CVV}), we deduce the existence of a limiting process $(Y, Z, V, K) \in \mathcal{S}^2 \times \mathcal{H}^2 \times \mathcal{V}^2_\gamma \times \mathcal{K}^2$ such that
	\begin{equation}\label{CV}
		\lim\limits_{n \rightarrow +\infty} \left( \left\| Y^n - Y \right\|^2_{\mathcal{S}^2} + \left\| Z^n - Z \right\|^2_{\mathcal{H}^2} + \left\| V^n - V \right\|^2_{\mathcal{V}^2_\gamma} + \left\| K^n - K \right\|^2_{\mathcal{K}^2} \right) = 0.
	\end{equation}
	Then, passing to the limit term by term in $\mathbb{L}^2\left(\Omega, d\mathbb{P}\right)$ as $n \rightarrow +\infty$ in (\ref{penalization equations with respect to the two reflection barriers}), we obtain
	\begin{equation}
		Y_t = \xi + \int_{t}^{T} g(s)\, ds + \left(K_T - K_t\right) - \int_{t}^{T} Z_s\, dB_s - \sum_{k=1}^d\int_{t }^{T}V^{(k)}_s d \widetilde{N}^{(k)}_s,
		\quad t \in [0,T].
		\label{equation verified by Y}
	\end{equation}
	
	Finally, to verify the Skorokhod conditions, we use the uniform estimate (\ref{UI}) for the sequences $\{K^{n,+}_\vartheta\}_{n \in \mathbb{N}}$ and $\{K^{n,-}_\vartheta\}_{n \in \mathbb{N}}$. Combined with the Hilbert space structure of $\mathbb{L}^2\left(\Omega, d\mathbb{P}\right)$, this allows us to extract a subsequence that converges weakly in the corresponding space to some $\mathcal{F}_{\vartheta}$-measurable random variable $K^{\pm}_{\vartheta}$ for each $\vartheta \in \mathcal{T}_{0}^T$.\\
	Next, by setting $\bar{K}_{\vartheta} = K^{+}_{\vartheta} - K^{-}_{\vartheta}$ and applying Mazur's Theorem (see Theorem 2 in \cite{Yosida2012}, p.~120), we obtain that for any $n \in \mathbb{N}$, there exists an integer $q(n) \geq n$, a set of weights $(\gamma^{(\vartheta,n)}_j)_{j \in \{n, \dots, q(n)\}} \subset \mathbb{R}^{+}$, and a convex combination $\sum_{j=n}^{q(n)} \gamma^{(\vartheta,n)}_j K^{j ,\pm}_{\vartheta}$ such that $\sum_{j=n}^{q(n)} \gamma^{(\vartheta,n)}_j = 1$, and
	$$
	\bar{K}^{n,\pm}_{\vartheta} := \sum_{j=n}^{q(n)} \gamma^{(\vartheta,n)}_j K^{j ,\pm}_{\vartheta} \xrightarrow[n \rightarrow +\infty]{} K^{\pm}_{\vartheta} \quad \text{in } \mathbb{L}^2(\Omega, d\mathbb{P}).
	$$
	By denoting $\bar{K}_{\vartheta}^n = \bar{K}_{\vartheta}^{n,+} - \bar{K}_{\vartheta}^{n,-}$ and using (\ref{CV}), we derive that  
	$$
	\lim\limits_{n \rightarrow +\infty} \mathbb{E}\left[ \left| \bar{K}_{\vartheta}^n - \bar{K}_{\vartheta} \right|^2 \right] = 0 
	\quad \text{and} \quad 
	\lim\limits_{n \rightarrow +\infty} \mathbb{E}\left[ \left| \bar{K}_{\vartheta}^n - K_{\vartheta} \right|^2 \right] = 0,
	$$
	which yields $\bar{K}_t = K_t$ for all $t \in [0,T]$, $\mathbb{P}$-a.s. \\
	Using again (\ref{UI}) and the selection principle (see Theorem 4.3.3 in \cite{Chung2001}, p.~88), we deduce that there exists a subsequence of $\{\bar{K}^{n,+}_{T}(\omega)\}_{n \in \mathbb{N}}$ (resp. $\{\bar{K}^{n,-}_{T}(\omega)\}_{n \in \mathbb{N}}$) that converges weakly to $K_{T}^{+}(\omega)$ (resp. $K_{T}^{-}(\omega)$).\\
	Set $L^{\xi}_t:=L_t \mathds{1}_{\{t< T\}}+\xi \mathds{1}_{\{t=T\}}$ and $U^{\xi}_t:=U_t \mathds{1}_{\{t< T\}}+\xi \mathds{1}_{\{t=T\}}$. By the notion of the Snell envelope, we know that
	\begin{equation*}
		Y_t -\mathbb{E}^{\mathcal{F}_{t}}\left[\xi-\int_{t}^{T} g(s)ds \right]=\mathfrak{R}_t(\pi^{+})-\mathfrak{R}_t(\pi^{-}),
	\end{equation*}
	where $\mathfrak{R}_{\cdot}$ denotes the Snell envelope operator with
	\begin{equation*}
		\begin{split}
			\pi^{+}_t&:=L^{\xi}_t-	\mathbb{E}^{\mathcal{F}_{t}}\left[\xi-\left(  K^{-}_{T}-K^{-}_{t }  \right)   \right];\qquad \pi^{+}_{T}=0,\\
			\pi^{-}_t&:=	-U^{\xi}_t+\mathbb{E}^{\mathcal{F}_{t}}\left[\xi+\left(  K^{+}_{T}-K^{+}_{t}\right)    \right];\quad \pi^{-}_{T}=0.
		\end{split}
	\end{equation*}
	Since $(\mathfrak{R}_t(\pi^{\pm}))_{t \leq T}$ are potentials satisfying $\mathbb{E}[\sup_{0 \leq s \leq T} \left| \mathfrak{R}_s(\pi^{\pm}) \right|^2]<\infty$, it follows from the Doob–Meyer decomposition of supermartingales and the uniqueness of the solution, along with results from optimal stopping theory relating to Snell envelopes (see, for example, the work by El Karoui \cite{ElKaroui1981} or by Kobylanski and Quenez \cite{Kobylanski-Quenez2012}), completes the proof of the first part. Finally, the integrability property for the triplet $(Y, Z, V)$ is obtained by applying Fatou's Lemma to the uniform estimate (\ref{UI}) and using the convergence result (\ref{CV}) to pass to the limit along a subsequence.
\end{proof}

\subsubsection{General case}
We are now ready to present the main result of this section. In this setting, the generator $f$ is general and may depend on both variables $(y, z, v)$.
\begin{theorem}\label{basic Thm}\footnote{The first author, \textbf{Badr Elmansouri}, would like to express his sincere gratitude to Professor \textbf{Youssef Ouknine} for his insightful remarks concerning the regularity of the solution when applying the fixed-point argument in an appropriate Banach space for the state variable.}
	Assume that \textbf{(H1)}--\textbf{(H4)} are satisfied for a sufficiently large value of $\beta$. Then, the DRBSDE (\ref{basic equation}) associated with $\left(\xi, f, L, U\right)$ admits a unique solution $\left(Y_t, Z_t, V_t, K^{+}_t, K^{-}_t\right)_{t \leq T}$ belonging to $\mathfrak{D}^2_{\beta}$.
\end{theorem}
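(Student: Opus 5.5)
Uniqueness is already given by Corollary \ref{Uniq}, so only existence remains. I would obtain it by a Picard-type fixed-point argument built on Theorem \ref{Existence and uniquenss: g}. The plan is to freeze the arguments of the driver and then show that the resulting solution map is a strict contraction for $\beta$ large.

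First I fix the setting. Take the Banach space $\mathfrak{B}^2_{\beta}$, or more conveniently the space $(\mathcal{S}^{2,\alpha}_\beta\cap\mathcal{S}^2_\beta)\times\mathcal{H}^2_\beta\times\mathcal{V}^2_{\lambda,\beta}$ with norm
\[
\|(U,P,Q)\|^2_\beta:=\mathbb{E}\Big[\int_0^T e^{\beta A_s}\big(\alpha_s^2|U_s|^2+|P_s|^2+\|Q_s\|_\lambda^2\big)ds\Big].
\]
Given $(U,P,Q)$ in this space, set $g(s):=f(s,U_s,P_s,Q_s)$. By \textbf{(H2)}-(ii)(a),
\[
|g(s)|/\alpha_s\le |f(s,0,0,0)|/\alpha_s+\alpha_s|U_s|+|P_s|+\|Q_s\|_\lambda ,
\]
where I use $\kappa_s\le\alpha_s^2$ and $\theta_s,\varrho_s\le\alpha_s$. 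Hence $g/\alpha\in\mathcal{H}^2_\beta$ by \textbf{(H2)}-(iii). Theorem \ref{Existence and uniquenss: g} then gives a unique solution $(Y,Z,V,K^+,K^-)\in\mathfrak{D}^2_\beta$ of the DRBSDE with data $(\xi,g,L,U)$. I define $\Phi(U,P,Q):=(Y,Z,V)$, which maps the space into itself.

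Next I prove that $\Phi$ is a contraction. Take two inputs $(U,P,Q),(U',P',Q')$ with outputs $(Y,Z,V,K^\pm)$ and $(Y',Z',V',K'^\pm)$. Both outputs share $\xi,L,U$, so I apply Itô's formula to $e^{\beta A_t}|\overline Y_t|^2$ exactly as in Proposition \ref{Useful proposition}. The Skorokhod conditions give $\int e^{\beta A}\overline Y_{s-}d\overline K^+\le0$ and $\int e^{\beta A}\overline Y_{s-}d\overline K^-\ge0$, since $\overline L=\overline U=0$. This time the driver term is
\[
2\overline Y_s(g-g')\le \beta\alpha_s^2|\overline Y_s|^2+\tfrac{3}{\beta}\Big(\alpha_s^2|\overline U_s|^2+|\overline P_s|^2+\|\overline Q_s\|^2_\lambda\Big),
\]
obtained from $2ab\le \beta a^2/3+3b^2/\beta$ termwise after dividing by $\alpha_s$. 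Taking expectations with Lemma \ref{Lemma 1} yields
\[
\mathbb{E}\int_0^T e^{\beta A_s}\big(|\overline Z_s|^2+\|\overline V_s\|_\lambda^2\big)ds\le\tfrac{3}{\beta}\|(\overline U,\overline P,\overline Q)\|_\beta^2 .
\]
To also control $\mathbb{E}\int e^{\beta A}\alpha^2|\overline Y|^2ds$, I split the drift with $\varepsilon$: use $2ab\le \tfrac{\beta}{2}a^2+\tfrac{2}{\beta}b^2$, keeping $\tfrac{\beta}{2}\int e^{\beta A}|\overline Y|^2dA$ on the left. This gives
\[
\|\Phi(U,P,Q)-\Phi(U',P',Q')\|_\beta^2\le \tfrac{c}{\beta}\,\|(\overline U,\overline P,\overline Q)\|_\beta^2 ,
\]
with $c$ a universal constant (for instance $c=6$ after normalizing the $\alpha^2|\overline Y|^2$ term by $\beta/2$). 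For $\beta>c$ the map $\Phi$ is a strict contraction, and the Banach fixed-point theorem gives a unique fixed point $(Y,Z,V)$.

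Finally I recover the full solution. At the fixed point, the associated $K^\pm$ from Theorem \ref{Existence and uniquenss: g} yield a solution of (\ref{basic equation}) with the genuine driver $f$. It lies in $\mathfrak{D}^2_\beta$ because it is the solution of Theorem \ref{Existence and uniquenss: g} for the admissible $g$. In particular $Y\in\mathcal{S}^2_\beta$: the fixed-point space need not carry the sup-norm, but the $\mathcal{S}^2_\beta$ regularity and the $K^\pm\in\mathcal{K}^2$ bound are inherited from the frozen-driver theorem.

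The main obstacle is this regularity issue, which is the point flagged in the footnote. The contraction holds naturally only in the integrated $dA$-norm, not in the $\mathcal{S}^2_\beta$ norm, because the $K$ terms do not cancel in a sup estimate. I would therefore run the fixed point in the integrated norm and appeal to the frozen-driver theorem a posteriori for the sup-integrability, checking that (\ref{UI}) applies with $g/\alpha\in\mathcal{H}^2_\beta$ uniformly.
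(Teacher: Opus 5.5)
Your argument works, but it contracts in a different norm from the paper, and one step needs repair.

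\textbf{Comparison with the paper.} The paper works in $\mathfrak{B}^2_{\beta}$ with its full norm, including $\mathbb{E}[\sup_{t} e^{\beta A_t}|Y_t|^2]$. After the integrated estimate \eqref{inq1}, which is the same It\^{o}/Skorokhod computation you perform, it adds a BDG bound \eqref{inq2} on the supremum. The contraction therefore holds in a space whose norm already encodes RCLL paths and $\mathcal{S}^2_\beta$-integrability. You contract only in the integrated norm and recover $\mathcal{S}^2_\beta$ a posteriori, which spares the BDG step. Your constants are fine: for $\beta\ge 2$ the factor $\beta/2\ge 1$ gives the bound $6/\beta$.

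\textbf{The repair: the domain is not complete.} The space $(\mathcal{S}^{2,\alpha}_\beta\cap\mathcal{S}^2_\beta)\times\mathcal{H}^2_\beta\times\mathcal{V}^2_{\lambda,\beta}$, equipped only with the integrated norm, is \emph{not} complete. Right-continuity and finiteness of $\mathbb{E}[\sup_t e^{\beta A_t}|Y_t|^2]$ are lost under $\mathcal{S}^{2,\alpha}_\beta$-limits, so the Banach fixed-point theorem cannot be invoked there as you state. Take instead the domain $\mathcal{S}^{2,\alpha}_\beta\times\mathcal{H}^2_\beta\times\mathcal{V}^2_{\lambda,\beta}$: progressively measurable processes modulo $d\mathbb{P}\otimes ds$-null sets, which is a complete weighted $L^2$ space. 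Your growth bound shows $\Phi$ is still well defined on it, because Theorem \ref{Existence and uniquenss: g} only needs $g/\alpha\in\mathcal{H}^2_\beta$. The fixed point equals its own image, so it lies in the range of $\Phi$. That range is contained in $\mathfrak{B}^2_\beta$ and comes with $K^\pm\in\mathcal{K}^2$. No uniform version of \eqref{UI} is needed.

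\textbf{Your reason for avoiding the sup norm is wrong.} With common barriers, the Skorokhod conditions give
$\bar Y_{s-}\,d\bar K^+_s=(L_{s-}-Y'_{s-})\,dK^+_s+(L_{s-}-Y_{s-})\,dK^{\prime,+}_s$
and
$-\bar Y_{s-}\,d\bar K^-_s=(Y'_{s-}-U_{s-})\,dK^-_s+(Y_{s-}-U_{s-})\,dK^{\prime,-}_s$.
These are nonpositive \emph{measures}, so the $K$-terms can be discarded pathwise on every interval $[t,T]$ before taking the supremum in $t$. This is exactly why \eqref{inq2} goes through.

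Both routes are therefore available. The paper's builds the regularity into the contraction space at the price of a BDG estimate. Yours is lighter but needs the enlarged, complete domain.
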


\begin{proof}
	Let us consider the Banach space $\mathfrak{B}^2_{\beta}=	\left(
	\mathcal{S}^2_{\beta}
	\cap
	\mathcal{S}^{2,\alpha}_{\beta}
	\right)
	\times
	\mathcal{H}^2_{\beta}
	\times
	\mathcal{V}^2_{\lambda,\beta}
	$, endowed with the natural norm
	$$
	\left\|\left(Y,Z,V\right) \right\|_{\beta}=\left(\mathbb{E}\left[\sup_{0 \leq t \leq T} e^{\beta A_t} \left| {Y}_t \right|^2+\int_{0}^{T}e^{\beta A_s}\left(\left| Y_s \alpha_s\right|^2+\left| Z_s \right|^2+  \|V_s \circ \sqrt{\lambda_s}\|^2_{\ell^2}\right) ds\right] \right)^{\frac{1}{2}}.
	$$
	Using Theorem \ref{Existence and uniquenss: g}, we define the mapping $\Psi$ as follows. Let $\Psi$ be a mapping from $\mathfrak{B}^2_{\beta}$ into itself that associates with each triple $\left(X, W, P\right)$ the corresponding triple $\left(Y, Z, V\right)$, where $\left(Y, Z, V\right)$ is the solution of the DRBSDE \eqref{basic equation} associated with $\left(\xi, f\left(t, X_t, W_t, P_t\right), L, U\right)$. Let $\left(X^{\prime}, W^{\prime}, P^{\prime}\right)$ be another element of $\mathfrak{B}^2_{\beta}$, and set $\left(Y^{\prime}, Z^{\prime}, V^{\prime}\right) := \Psi\left(X^{\prime}, W^{\prime}, P^{\prime}\right)$. \
	By Theorem \ref{Existence and uniquenss: g}, the processes $Y$ and $Y'$ admit RCLL modifications.\\
	We now define $\overline{\mathcal{R}} := \mathcal{R} - \mathcal{R}^{\prime}$, for each $\mathcal{R} \in \{Y, Z, K^{+}, K^{-}, V, X, W, P\}$.
	
	Applying It\^{o}'s formula and using the Skorokhod condition \eqref{basic equation}-(iii) for the reflecting processes $\overline{K}^+$ and $\overline{K}^-$, namely, $\overline{Y}_s d\overline{K}^+_s -\overline{Y}_s d\overline{K}^-_s \leq 0$ a.s., we obtain, for any $t \leq T$ and $\beta>1$,
	\begin{equation}\label{inq1}
		\begin{split}
			&\mathbb{E}\left[\int_{0}^{T}e^{\beta A_s} \left|\alpha_s \overline{Y}_s \right|^2 d\left\langle M\right\rangle_s
			\right]+\mathbb{E}\left[\int_{0}^{T}e^{\beta A_s} \left| \bar{Z}_s \right|^2 ds
			\right]+\mathbb{E}\left[\int_{0}^{T}e^{\beta A_s}  \|\overline{V}_s\circ \sqrt{\lambda_s}\|^2_{\ell^2} ds
			\right]\\
			& \leq \dfrac{3}{ \beta -1 } \mathbb{E}\left[\int_{0}^{T}e^{\beta A_s}\left( \left\{\left|\alpha_s \overline{X}_s \right|^2+\left|\overline{W}_s \right|^2+\|\overline{P}_s \circ \sqrt{\lambda_s}\|^2_{\ell^2} \right\}  ds\right)  \right] .
		\end{split}
	\end{equation}

Next, using arguments similar to those employed in Lemma \ref{Lemma 1} and Proposition \ref{Useful proposition}, together with estimate \eqref{inq1}, we obtain the following estimates:
	\begin{equation}\label{inq2}
	\begin{split}
		&\mathbb{E} \left[ \sup_{0 \leq t \leq T} e^{\beta A_t} \left| \overline{Y}_t \right|^2  \right]\\
		&  \leq \dfrac{12 c^2+3}{ \beta -1 } \mathbb{E}\left[ \sup_{0 \leq t \leq T} e^{\beta A_t} \left| \overline{X}_t \right|^2+\int_{0}^{T}e^{\beta A_s}\left( \left\{\left|\alpha_s \overline{X}_s \right|^2+\left|\overline{W}_s \right|^2+\|\overline{P}_s \circ \sqrt{\lambda_s}\|^2_{\ell^2} \right\}  ds\right)  \right] .
	\end{split}
\end{equation}
By choosing $\beta > 12c^2 + 7$ and combining \eqref{inq1} and \eqref{inq2}, we obtain
$$
\left\|\left(\bar Y, \bar Z, \bar V\right) \right\|_{\beta} \leq \mathfrak{c} \left\|\left(\bar X, \bar W, \bar P\right) \right\|_{\beta}
$$
where $c\in (0,1)$. Therefore, the mapping $\Psi$ is a strict contraction on the Banach space $\mathfrak{B}^2_{\beta}$. Consequently, there exists a unique triple of processes $\left(Y_t, Z_t, V_t\right)_{t \leq T}$ that is a fixed point of $\Psi$, i.e., $\Psi\left(Y, Z, V\right) = \left(Y, Z, V\right)$. Together with $K^{+}$ and $K^{-}$, this triple forms the unique solution of the DRBSDE (\ref{basic equation}) associated with $\left(\xi, f, L, U\right)$.

\end{proof}

\section{Comparison principal}
\label{sec3}
The comparison theorem is one of the principal tools in the theory of BSDEs. It is well known, however, that this result does not hold in general in the presence of jumps (see the counterexample in Barles et al. \cite{BarlesBuckdahnPardoux1997}). 

In order to establish a comparison theorem in this setting, and following the approach of \cite{JamaliOtmani2019}, \cite{DumitrescuQuenezSulem2018}, \cite{KrusePopier2016}, and \cite{Royer2006}, we impose a monotonicity condition on the generator $f$ with respect to the $v$-variable, along with an additional boundedness assumption on the random variable $A_T$.
\begin{itemize}
	\item[\textbf{(H5)}] Assume that $A_T=\int_{0}^{T} \left\{ \kappa_s + \theta_s^2+\varrho^2_s \right\} ds$ is a bounded random variable, and that there exists a map
	\begin{align*}
		\zeta : \Omega \times [0,T] \times \mathbb{R}^2 \times \ell^2 \times \ell^2 
		&\rightarrow \mathbb{R}^d; \\
		(\omega, t, y, z, v_1, v_2) &\mapsto \zeta^{y,z,v_1,v_2}_t(\omega)
		:= \left( \zeta^{y,z,v_1,v_2;(1)}_t(\omega), \dots, \zeta^{y,z,v_1,v_2;(d)}_t(\omega) \right)
	\end{align*}
	that is $\mathcal{P} \otimes \mathcal{B}(\mathbb{R}^2) \otimes \mathcal{B}(\ell^2) \otimes \mathcal{B}(\ell^2)$-measurable, and satisfies the following properties:
	\begin{equation}\label{Mono-}
		\left\lbrace 
		\begin{aligned}
			&\big| \zeta^{y,z,v_1,v_2;(k)}_t \big| \sqrt{\lambda^{(k)}_t} \leq  \varrho_t , ~ \text{ for each } k \in \{1, \dots, d\}, \text{ and all } t \in [0,T], \text{ a.s.}; \\
			&\sum_{k=1}^d \zeta^{y,z,v_1,v_2;(k)}_t > -1, \quad d\mathbb{P} \otimes dt\text{-a.e.}
		\end{aligned}
		\right.
	\end{equation}
Moreover, $d\mathbb{P} \otimes dt$-a.s., for all $(y, z, v_1, v_2) \in \mathbb{R}^2 \times \ell^2 \times \ell^2$, we have:
\begin{equation}\label{Mono}
	f(t, y, z, v_1) - f(t, y, z, v_2) \leq \sum_{k=1}^d \zeta^{y,z,v_1,v_2;(k)}_t (v^{(k)}_1 - v^{(k)}_2) {\lambda^{(k)}_t}.
\end{equation}
\end{itemize}

Under the additional assumption \textbf{(H5)}, we make the following observation, which leads to a version of assumption \textbf{(H2)}-(ii)-(a) formulated with respect to the variable $v$:
\begin{remark}\label{MonoV}
\begin{itemize}
	\item If (\ref{Mono}) holds, then by exchanging the roles of $v_1$ and $v_2$ in $\zeta$, we obtain
	$$
	f(t, y, z, v_1) - f(t, y, z, v_2) \geq \sum_{k=1}^d \zeta^{y,z,v_2,v_1;(k)}_t (v^{(k)}_1 - v^{(k)}_2) {\lambda^{(k)}_t}=\big\langle \zeta^{y,z,v_2,v_1}_t \circ \sqrt{\lambda^{}_t}, (v_1 - v_2) \circ \sqrt{\lambda^{}_t} \big\rangle_{\ell^2} .
	$$
	Applying the Cauchy–Schwarz inequality and using (\ref{Mono-}), we deduce
	$$
	\left| f(t, y, z, v_1) - f(t, y, z, v_2) \right| \leq \|\zeta_t\|_{\ell^2} \| (v - v') \circ \sqrt{\lambda_t} \|_{\ell^2} \leq d \varrho^2_s \| (v - v') \circ \sqrt{\lambda_t} \|_{\ell^2}.
	$$
	
	\item We emphasize that assumption \textbf{(H5)} has been previously employed in several contexts, including filtrations generated by Brownian–Poisson processes \cite{KrusePopier2016,Royer2006}, in defaultable frameworks \cite{DumitrescuQuenezSulem2018,D10}, and also in more general settings as in \cite{Nie2021}.
\end{itemize}
\end{remark}

Now we are in position to state the main result of this section.
\begin{theorem}\label{Compa}
Let $\left(Y^j, Z^j, V^j, K^{+, j}, K^{-, j}\right)$ be the unique solution of the DRBSDE (\ref{basic equation}) associated with data $\left(\xi^j, f^j, L^j, U^j\right)$, for $j=1,2$. \\
Assume that $f^1\left(., Y^2, Z^2, V^2\right) \leq f^2\left(., Y^2, Z^2, V^2\right)$ a.s., $\xi^1 \leq \xi^2$ a.s. and $L^1 \leq L^2$, $U^1 \leq U^2$ a.s.

Then $Y^1 \leq Y^2$ a.s.
\end{theorem}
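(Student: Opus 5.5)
I would prove Theorem \ref{Compa} by applying a Tanaka--Itô (Meyer--Itô) formula to $e^{\beta A_t}\big((Y^1_t-Y^2_t)^+\big)^2$ and reducing the jump part to a Girsanov-type linearization made possible by \textbf{(H5)}. Set $\overline{Y}:=Y^1-Y^2$, $\overline{Z}:=Z^1-Z^2$, $\overline{V}:=V^1-V^2$, and fix $t\in[0,T]$. Since $\xi^1\le\xi^2$, the terminal term $(\overline{Y}_T^+)^2$ vanishes. The semimartingale $\overline{Y}$ is driven by the generator difference, the reflection terms $dK^{\pm,1}-dK^{\pm,2}$, and the two martingale integrals. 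The goal is an estimate of the form
\[
\mathbb{E}\Big[e^{\beta A_t}(\overline{Y}_t^+)^2\Big]\le C\,\mathbb{E}\Big[\int_t^T e^{\beta A_s}(\overline{Y}_s^+)^2\,dA_s\Big]
\]
plus nonpositive terms. Gronwall's lemma, or absorption with $\beta$ large, then gives $\overline{Y}^+=0$. Since both solutions are RCLL, this yields $Y^1\le Y^2$ for all $t$ a.s.

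The reflection terms are handled as follows. On $\{\overline{Y}_{s-}>0\}$ we have $Y^1_{s-}>Y^2_{s-}\ge L^2_{s-}\ge L^1_{s-}$, so the Skorokhod condition gives $\overline{Y}_{s-}^+\,dK^{+,1}_s=0$. The term $-\overline{Y}^+_{s-}\,dK^{+,2}_s$ is nonpositive. Symmetrically, $Y^2_{s-}<Y^1_{s-}\le U^1_{s-}\le U^2_{s-}$ gives $\overline{Y}^+_{s-}\,dK^{-,2}_s=0$, and the term $-\overline{Y}^+_{s-}\,dK^{-,1}_s$ has the right sign. This is exactly where the orderings $L^1\le L^2$ and $U^1\le U^2$ enter. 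The predictable jumps of $K^\pm$ also need care: the jump correction term of $x\mapsto (x^+)^2$ is convex-type, so I would keep it, use $\Delta\overline{Y}=-\Delta\overline{K}^++\Delta\overline{K}^-$ at predictable times, and check via Remark \ref{Reflection remark} that these contributions are nonpositive.

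For the generator, I split
\[
f^1(s,Y^1,Z^1,V^1)-f^2(s,Y^2,Z^2,V^2)=\big[f^1(s,Y^1,Z^1,V^1)-f^1(s,Y^2,Z^2,V^2)\big]+\big[f^1-f^2\big](s,Y^2,Z^2,V^2).
\]
The second bracket is $\le0$ by hypothesis. For the first bracket, I use the Lipschitz bounds in $y$ and $z$, and for the $v$-increment I use \eqref{Mono}:
\[
f^1(s,Y^2,Z^2,V^1)-f^1(s,Y^2,Z^2,V^2)\le\sum_k\zeta^{(k)}_s\overline{V}^{(k)}_s\lambda^{(k)}_s .
\]

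The main obstacle is the jump term. In the squared-positive-part Itô formula, the compensator of the jumps contributes $\sum_k\int\big[((\overline{Y}_{s-}+\overline{V}^{(k)}_s)^+)^2-(\overline{Y}^+_{s-})^2-2\overline{Y}^+_{s-}\overline{V}^{(k)}_s\big]\lambda^{(k)}_s\,ds$. This must absorb the linear term $2\overline{Y}^+_{s-}\sum_k\zeta^{(k)}_s\overline{V}^{(k)}_s\lambda^{(k)}_s$. A plain Young inequality fails because $\overline{V}$ is not localized to $\{\overline{Y}>0\}$.

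My first attempt will instead use a linearization. Write the first bracket as $\delta_s\overline{Y}_s+\eta_s\overline{Z}_s+\sum_k\zeta^{(k)}_s\overline{V}^{(k)}_s\lambda^{(k)}_s+(\text{nonpositive})$, with $|\delta|\le\kappa$ and $|\eta|\le\theta$. Then introduce the Doléans-Dade exponential $\Gamma$ of $\int\eta\,dB+\sum_k\int\zeta^{(k)}\,d\widetilde{N}^{(k)}$, weighted by $e^{\int\delta}$. The condition $\sum_k\zeta^{(k)}>-1$ in \eqref{Mono-} yields $\Gamma>0$. The boundedness of $A_T$ together with $|\zeta^{(k)}|\sqrt{\lambda^{(k)}}\le\varrho$ makes $\Gamma$ square integrable, by a Kazamaki/Lépingle--Mémin type criterion. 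Next, apply Itô's formula to $\Gamma_s\overline{Y}^+_s$ (Tanaka) between $t$ and the stopping time $\tau=\inf\{s\ge t:\overline{Y}_s\le0\}\wedge T$. On $[t,\tau)$ the reflection terms have the right sign by the argument above, and at $\tau$ we have $\overline{Y}^+_\tau=0$ or $\tau=T$. Taking conditional expectations gives $\Gamma_t\overline{Y}^+_t\le0$.

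The delicate checks are twice the same point: that the jump of $\overline{Y}$ at $\tau$ does not spoil the sign, and that the local martingale is a true martingale. For the latter I would use Lemma \ref{Lemma 1}-type BDG bounds together with the integrability of $\Gamma$.
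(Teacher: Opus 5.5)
Your final argument is correct. It is a close variant of the paper's proof rather than the same proof.

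\textbf{What the two proofs share.}
\begin{itemize}
\item The same splitting of $f^1(s,Y^1,Z^1,V^1)-f^2(s,Y^2,Z^2,V^2)$ into a $y$-increment, a $z$-increment, a $v$-increment bounded via \eqref{Mono}, and the nonpositive term $f_s$.
\item The same stochastic exponential of $\int\eta\,dB+\sum_k\int\zeta^{(k)}\,d\widetilde N^{(k)}$.
\item The same Skorokhod argument giving $dK^{+,1}=dK^{-,2}=0$ on $\{\overline{Y}_{s-}>0\}$.
\end{itemize}

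\textbf{How the paper proceeds.} It uses the exponential to define $\mathbb{Q}$ and rewrites the martingale parts with $\overline{B}$ and $\overline{N}^{(k)}$. It then runs Tanaka on $e^{\beta A}|\overline{Y}^+|^2$. The $y$-term $\varpi_s|\overline{Y}^+_s|^2\le\alpha_s^2|\overline{Y}^+_s|^2$ is absorbed by taking $\beta>2$. The convex jump corrections of $x\mapsto|x^+|^2$ are nonnegative pathwise and are simply dropped.

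\textbf{How you proceed, and what it buys.} You stay under $\mathbb{P}$ and use $\Gamma$ as an integrating factor, which is the Bayes-formula counterpart of working under $\mathbb{Q}$. You remove the $y$-term exactly through $e^{\int\delta}$ and work with $\overline{Y}^+$ linearly. This needs no large $\beta$ and no Girsanov theorem for the Poisson part.

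\textbf{The cost of your route.} The product rule produces a cross-jump term that your sketch does not spell out. At a jump of $N^{(k)}$,
\[
\Delta\Gamma_s\,\Delta\overline{Y}^+_s=\Gamma_{s-}\zeta^{(k)}_s\bigl(\mathds{1}_{\{\overline{Y}_{s-}>0\}}\overline{V}^{(k)}_s+c_s\bigr),
\qquad c_s:=\Delta\overline{Y}^+_s-\mathds{1}_{\{\overline{Y}_{s-}>0\}}\Delta\overline{Y}_s\ge 0 .
\]
Only the first piece cancels the drift $\mathds{1}_{\{\overline{Y}_{s-}>0\}}\sum_k\zeta^{(k)}\overline{V}^{(k)}\lambda^{(k)}$. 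The leftover $\Gamma_{s-}\zeta^{(k)}_s c_s$ can be negative. It must be paired with Tanaka's own correction $\Gamma_{s-}c_s$ to give $\Gamma_{s-}(1+\zeta^{(k)}_s)c_s=\Gamma_s c_s\ge 0$. This is the real content of your ``jump at $\tau$'' check.

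\textbf{The stopping time $\tau$ is superfluous.} The indicator $\mathds{1}_{\{\overline{Y}_{s-}>0\}}$ in Tanaka's formula already localizes the reflection terms. Since $\overline{\xi}^+=0$, you can integrate directly up to $T$.

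\textbf{A caveat that applies to you and to the paper alike.} The condition $\sum_k\zeta^{(k)}>-1$ does not yield $\Gamma>0$ (or $\mathscr{E}>0$) when $d\ge 2$. The processes $N^{(k)}$ a.s.\ have no common jumps, so $\Gamma_s=\Gamma_{s-}(1+\zeta^{(k)}_s)$ at a jump of $N^{(k)}$. What is actually needed, both for $\Gamma>0$ and for $\Gamma_s c_s\ge 0$ above, is $\zeta^{(k)}>-1$ for every $k$. The sum condition suffices only for $d=1$.
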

\begin{proof}
Here, we follow the arguments used in the proof of Theorem 2.5 in \cite{Royer2006}, as well as those of Proposition 4 in \cite{KrusePopier2016}, originally established for classical BSDEs with jumps, and adapt them to our doubly reflected setting.

As usual, we define $\overline{\mathcal{R}} := \mathcal{R}^1 - \mathcal{R}^2$ for each $\mathcal{R} \in \{Y, Z, V, \xi, f, L, U\}$, and $\bar{\mathcal{R}} := \mathcal{R}^1 - \mathcal{R}^2$ for each $\mathcal{R} \in \{K^{+}, K^{-}\}$. Then, the quintuple $\left(\overline{Y}, \overline{Z}, \overline{V}, \bar{K}^+, \bar{K}^-\right)$ satisfies
\begin{equation}\label{Gs}
	\overline{Y}_{t}=\overline{\xi}+\int_{t}^{T} g(s)ds+\left( \bar{K}^{+}_{ T}-\bar{K}^{+}_{t}\right)-\left(\bar{K}^{-}_{ T}-\bar{K}^{-}_{t }\right)
	-\int_{t}^{T} \overline{Z}_s d B_s-\sum_{k=1}^d\int_{t }^{T}\overline{V}^{(k)}_s d \widetilde{N}^{(k)}_s,
\end{equation}
where $g(s)=f^1(s,Y^1_s,Z^1_s,V^1_s)-f^2(s,Y^2_s,Z^2_s,V^2_s)$.\\ 
Now, we define
\begin{equation*}
	\left\lbrace 
	\begin{split}
		f_s:&=f^1(s,Y^2_s,Z^2_s,V^2_s)-f^2(s,Y^2_s,Z^2_s,V^2_s)\\
		\varpi_s:&=\frac{f^1(s,Y^1_s,Z^1_s,V^1_s)-f^1(s,Y^2_s,Z^1_s,V^1_s)}{\overline{Y}_{s}}\mathds{1}_{\{\overline{Y}_{s}\neq 0\}}\\
		\beta_s:&=\frac{f^1(s,Y^2_s,Z^1_s,V^1_s)-f^1(s,Y^2_s,Z^2_s,V^1_s)}{\overline{Z}_{s}}\mathds{1}_{\{\overline{Z}_{s}\neq 0\}}
	\end{split}
	\right. 
\end{equation*}
Note that from (\ref{Mono}), we have
\begin{equation*}\label{Forza Inter}
	\begin{split}
		g(s)&=f^1(s,Y^2_s,Z^2_s,V^1_s)-f^1(s,Y^2_s,Z^2_s,V^2_s)+\varpi_s \overline{Y}_s +\beta_s \overline{Z}_s+f_s\\ 
		&\leq f_s+\varpi_s \overline{Y}_s +\beta_s \overline{Z}_s+ \sum_{k=1}^d \zeta^{Y^2_s,Z^2_s,V^1_s,V^2_s;(k)}_s \overline{V}^{(k)}_s {\lambda^{(k)}_s}.
	\end{split}
\end{equation*}
Let us consider the process $(\mathscr{E}_{t})_{t \in [0,T]}$ as the solution to the following forward SDE:
$$
d\mathscr{E}_{t} = \mathscr{E}_{t-} \left( \beta_t \, dB_t + \sum_{k=1}^d \zeta^{Y^2_s,Z^2_s,V^1_s,V^2_s;(k)}_t \, d\widetilde{N}^{(k)}_t \right), \quad \mathscr{E}_{0} = 1,
$$
where the process $\zeta$ is the one given in assumption \textbf{(H5)}. From assumption \textbf{(H5)}, we deduce that the random variables $\int_{0}^{T} \beta^2_s \, ds$ and $\sum_{k=1}^d \int_{0}^{T} \big|\zeta^{Y^2_s,Z^2_s,V^1_s,V^2_s;(k)}_s\big|^2 \lambda_s^{(k)} \, ds$ are bounded. Therefore, the stochastic exponential $\mathscr{E}$ is a square-integrable martingale (see, e.g., \cite[Ch.~II, Sec.~2.5]{Delong2013}). Moreover, using \textbf{(H5)}, we also have $\mathscr{E} \in \mathcal{S}^2_\beta$. By applying the compact formula for the stochastic exponential and using property \eqref{Mono-}, we obtain that $\mathscr{E}_t > 0$ for all $t \in [0,T]$ (see, e.g., Theorem 37 in \cite{Protter2005}, p.~84). Thus, the process $\mathscr{E}$ defines a martingale density, and we can introduce a new probability measure $\mathbb{Q}$ on $(\Omega, \mathcal{F})$, equivalent to $\mathbb{P}$, defined as follows:
$$
d \mathbb{Q}:=\mathscr{E}_T d\mathbb{P}
$$
Using Girsanov’s theorem (see, e.g., Theorem 36 in \cite{Protter2005}, p.~133, or \cite[Theorem 2.5.1]{Delong2013}), we deduce that
$$
\overline{B}_t := B_t - \int_{0}^{t} \beta_s \, ds \quad \text{and} \quad \overline{N}^{(k)}_t := \widetilde{N}^{(k)}_t - \int_{0}^{t} \zeta^{Y^2_s,Z^2_s,V^2_s,V^1_s;(k)}_t \lambda^{(k)}_s \, ds, \quad t \in [0,T], \quad k \in \{1, \ldots, d\}
$$
are, respectively, a Brownian motion and a compensated inhomogeneous Poisson process under the probability measure $\mathbb{Q}$ and the filtration $\mathbb{F}$.

Next, we apply Tanaka’s formula (Theorem 68 in \cite{Protter2005}, p.~213) to $\overline{Y}$, given by \eqref{Gs}, using the convex function $x \mapsto |x^+|^2$. Then, applying an integration by parts formula under $\mathbb{Q}$ to the process $e^{\beta A_t} |\overline{Y}^+_t|^2$, and taking into account the inequality $\varpi_s \leq \kappa_s \leq \alpha^2_s$ from assumption \textbf{(H2)}-(ii), we deduce that
\begin{equation}\label{RHS}
	\begin{split}
		&e^{\beta A_t}|\overline{Y}^+_t|^2+\beta \int_{t}^{T}e^{\beta A_t}|\overline{Y}^+_s|^2 dA_s\\
		& \leq 2\int_{t}^{T}e^{\beta A_s}\overline{Y}^+_s\left(f^2(s,Y^2_s,Z^2_s,V^2_s)-f^2(s,Y^2_s,Z^2_s,V^1_s)+\varpi_s \overline{Y}_s +\beta_s \overline{Z}_s+f_s\right) ds+2\int_{t}^{T}\overline{Y}^+_{s-} d\bar{K}^+_s\\
		&\quad-2\int_{t}^{T}e^{\beta A_s}\overline{Y}^+_{s-} d\bar{K}^-_s-2\int_{t}^{T}e^{\beta A_s}\overline{Y}^+_{s-} \overline{Z}_s dB_s-2\sum_{k=1}^d \int_{t}^{T} \overline{Y}_{s-}\overline{V}^{(k)}_s d \widetilde{N}^{(k)}_s\\
		& \leq 2\int_{t}^{T} e^{\beta A_s}\overline{Y}^+_{s-} \left(f_sds+ \overline{Y}_{s-} dA_s \right)
		-2\int_{t}^{T}e^{\beta A_s}\overline{Y}^+_{s-} \overline{Z}_s d\overline{B}_s-2\sum_{k=1}^d \int_{t}^{T}e^{\beta A_s} \overline{Y}^+_{s-}\overline{V}^{(k)}_s d \overline{N}^{(k)}_s\\
		&\quad+2\int_{t}^{T}e^{\beta A_s}\overline{Y}^+_{s-} d\bar{K}^+_s-2\int_{t}^{T}e^{\beta A_s}\overline{Y}^+_{s-} d\bar{K}^-_s.
	\end{split}
\end{equation}
Now, from the Skorokhod condition and our assumptions on $L^1$ and $L^2$, we observe that on the set $\Theta^+ := \{s \in [t,T] : Y^1_{s-} > Y^2_{s-} \}$, it holds that $Y^1_{s-} > Y^2_{s-} \geq L^2_{s-} \geq L^1_{s-}$. Therefore, on $\Theta^+$, we have $dK^{+,1}_s = 0$. Hence, we obtain
\begin{equation*}
	\begin{split}
		\int_{t}^{T}e^{\beta A_s}\overline{Y}^+_{s-}d\bar{K}^+_s&=\int_{t}^{T}e^{\beta A_s}\left({Y}^2_{s-}-{Y}^1_{s-}\right)\mathds{1}_{\{s \in \Theta^+\}}dK^{+,2}_s\\
		&=\int_{t}^{T}e^{\beta A_s}\left({L}^2_{s-}-{Y}^1_{s-}\right)\mathds{1}_{\{s \in \Theta^+\}}dK^{+,2}_s \leq 0.
	\end{split}
\end{equation*}
Similarly, we can prove that
$$
\int_{t}^{T}e^{\beta A_s}\overline{Y}^+_{s-}d\bar{K}^-_s\geq 0.
$$ 
Now, let $\{\tau_n\}_{n \geq 1}$ be a fundamental sequence of stopping times for the local martingales appearing on the right-hand side of \eqref{RHS}. For instance, we can define
$$
\tau_n := \inf\left\{ u \geq t : \int_{t}^{u} |\overline{Y}_s|^2 \left(  |\overline{Z}_s|^2 + \sum_{k=1}^d  |\overline{V}^{(k)}_s|^2 \big(1 + \zeta^{Y^2_s,Z^2_s,V^2_s,V^1_s;(k)}_s \big) \lambda^{(k)}_s \right) ds \geq n \right\} \wedge T, \quad \mathbb{Q}\text{-a.s.}
$$
Next, taking the conditional expectation with respect to $\mathcal{F}_t$ under $\mathbb{Q}$ on both sides of \eqref{RHS}, and using the assumption that $f_s \leq 0$, as well as choosing $\beta > 2$, we obtain
$$
e^{\beta A_t} |\overline{Y}^+_t|^2 \leq 0, \quad \mathbb{Q}\text{-a.s.}
$$
Hence, we conclude that $\overline{Y}^+_t = 0$ a.s. for each $t \in [0,T]$. Since $Y^1$ and $Y^2$ are RCLL processes, this implies that $Y^1 \leq Y^2$ $\mathbb{Q}$-a.s. on $[0,T]$, and thus also $\mathbb{P}$-a.s. on $[0,T]$, completing the proof.
\end{proof}

From Theorem \ref{Compa}, we derive the following remark:
\begin{remark}\label{Mono-prop}
\begin{itemize}
	\item If $L \equiv -\infty$, then $dK^{+} = 0$, and the comparison theorem also applies to upper reflected BSDEs.
	\item If $U \equiv +\infty$, then $dK^{-} = 0$, and the comparison theorem also applies to lower reflected BSDEs.
	\item If $L \equiv -\infty$ and $U \equiv +\infty$, then the comparison theorem reduces to the case of standard BSDEs.
\end{itemize}
\end{remark}

\section{Applications}
\label{sec44}
In this section, we present two closely related applications of the preceding results. The first concerns the nonlinear valuation of an American game option in a financial market driven by an inhomogeneous Lévy process. The second is devoted to the formulation and analysis of a generalized Dynkin game under nonlinear expectation.

\subsection{American game option in a Lévy market under the non-linear $\mathcal{E}^f$-expectation}
\label{sec4}
In this section, we show that the doubly reflected BSDE introduced in Section \ref{DRBSDE} can be used to solve the problem of pricing American game options in a market model driven by a non-homogeneous Lévy process, as studied in \cite{ElJamaliOtmani2022}. We consider a financial model that extends the classical Black–Scholes framework by incorporating jumps, described by the following forward stochastic differential equations:
\begin{equation}\label{Market}
	\left\lbrace
	\begin{split}
		dS^0_t &= r_t S^0_t \, dt, \quad S^0_0 = 1;\\
		dS_t &= S_{t-} \, dX_t, \quad S_0 > 0,
	\end{split} 
	\right.
\end{equation}
where $(S^0_t)_{t \leq T}$ represents the price process of a risk-free asset with interest rate $(r_t)_{t \leq T}$, and $(S_t)_{t \leq T}$ denotes the price of a risky asset. As shown in the Lévy market setting (see \cite[Section 5]{Elotmani2009} and the detailed discussion in \cite[p.~211]{ElJamaliOtmani2022}), the market model \eqref{Market} can be completed. Consequently, we treat \eqref{Market} as a complete market model in the sequel.

We assume that the barriers $(L, U)$ in the DRBSDE \eqref{basic equation} are given in the following Markovian form:
\begin{equation}\label{Markov}
	L_t = L(S_t), \quad \text{and} \quad U_t = U(S_t),
\end{equation}
where $L(\cdot)$ and $U(\cdot)$ are measurable functions $L, U: \mathbb{R} \rightarrow \mathbb{R}$ that are jointly continuous. Moreover, there exist constants $\mathfrak{c} > 0$ and $p \geq 1$ such that
$$
|L(x)| + |U(x)| \leq \mathfrak{c} \left(1 + |x|^p\right), \quad \forall x \in \mathbb{R}.
$$

To simplify the analysis and adapt assumptions \textbf{(H3)}-(i) and \textbf{(H3)}-(iii) to this Markovian setting, we consider the following modification:
\begin{itemize}
	\item[\textbf{(H3')}] We assume that:
	\begin{itemize}
		\item[(i)] $L(S_T) = U(S_T)$, $\mathbb{P}$-a.s.;
		\item[(iii)] $L(S_t) < U(S_t)$ and $L(S_{t-}) < U(S_{t-})$ for all $t \in [0,T[$, $\mathbb{P}$-a.s.
	\end{itemize}
\end{itemize}

In what follows, we refer to assumption \textbf{(H3')} as assumption \textbf{(H3)}, replacing \textbf{(H3)}-(i) and \textbf{(H3)}-(iii) with \textbf{(H3')}-(i) and \textbf{(H3')}-(iii), while retaining the integrability condition \textbf{(H3)}-(ii). This latter condition holds, for instance, when $\int_{0}^{T} |c_s| \, ds$ and $\sum_{k=1}^d \int_0^T \big|\gamma_s^{(k)}\big|^p \lambda_s^{(k)} \, ds$ are bounded for any $p \geq 2$ (see \cite[Proposition A.1]{Quenez2013}), together with assumption \textbf{(H5)}.

We now consider an American game option, which is a contract between a buyer and a seller, where both parties are allowed to exercise their respective rights at any stopping time before maturity $T$. If the seller chooses a cancellation time $\tau_2 \in \mathcal{T}_{0}^T$ and the buyer chooses an exercise time $\tau_1 \in \mathcal{T}_{0}^T$, then the seller pays the buyer the following payoff at time $\tau_1 \wedge \tau_2$:
\begin{equation}\label{Our-Payoff}
	\mathcal{P}(\tau_1,\tau_2) = L(S_{\tau_1}) \mathds{1}_{\{\tau_1 \leq \tau_2\}} + U(S_{\tau_2}) \mathds{1}_{\{\tau_2 < \tau_1\}}.
\end{equation}

The \textit{nonlinear pricing system}, also referred to as $f$-evaluation in the terminology of Peng \cite{Peng1999,Peng2004}, and denoted by $\mathcal{E}^f$, generalizes the classical linear expectation in order to extend the concept of expected utility (see, e.g., \cite{Duffie1992}), a foundational principle in modern economic theory, rooted in the work of Von Neumann and Morgenstern. To define this notion, we consider the following backward stochastic differential equation (BSDE) with terminal time $T$, terminal condition $\xi$, and driver $f$:
\begin{equation}\label{BSDEs}
	Y_{t} = \xi + \int_{t}^{T} f(s, Y_s, Z_s, V_s) \, ds
	- \int_{t}^{T} Z_s \, dB_s - \sum_{k=1}^d \int_{t}^{T} V^{(k)}_s \, d\widetilde{N}^{(k)}_s, \quad t \in [0,T].
\end{equation}

We now formally define the $f$-evaluation operator in this context:

\begin{definition}
	Let $\tau \in \mathcal{T}_{0}^T$ and let $\xi \in \mathbb{L}^2_{\beta}(\mathcal{F}_\tau)$. The $f$-evaluation of $\xi$ at time $t$ with horizon $\tau$ is defined by $\mathcal{E}^f_{t,\tau}(\xi) := X_t$, for all $t \in [0,T]$, $\mathbb{P}$-a.s., where $X$ is the first component of the solution to the BSDE \eqref{BSDEs} with driver $f \mathds{1}_{\{t \leq \tau\}}$, terminal time $T$, and terminal condition $\xi$. More generally, we define the operator $\mathcal{E}^f: (\tau, \xi) \mapsto \mathcal{E}^f_{\cdot, \tau}(\xi)$, and refer to $\mathcal{E}^f_{\cdot, \tau}(\xi)$ as the $\mathcal{E}^f$-expectation process of $\xi$.
\end{definition}

In this framework, the upper and lower values of the game option, denoted respectively by $\overline{\mathbf{V}}$ and $\underline{\mathbf{V}}$, are expressed via nonlinear expectations as follows:
$$
\overline{\mathbf{V}} = \inf_{\tau_2 \in \mathcal{T}_{0}^T} \sup_{\tau_1 \in \mathcal{T}_{0}^T} \mathcal{E}^f_{0,\tau_1 \wedge \tau_2}\big(\mathcal{P}(\tau_1,\tau_2)\big), \quad
\underline{\mathbf{V}} = \sup_{\tau_1 \in \mathcal{T}_{0}^T} \inf_{\tau_2 \in \mathcal{T}_{0}^T} \mathcal{E}^f_{0,\tau_1 \wedge \tau_2}\big(\mathcal{P}(\tau_1,\tau_2)\big).
$$

Since American game options can be formulated as Dynkin games in financial mathematics, the pricing problem naturally leads to the valuation of a generalized Dynkin game—a formulation first introduced by Dumitrescu et al.~\cite{D2016} and further studied in a defaultable setting in \cite{DumitrescuQuenezSulem2017}. In this framework, we say that the game option admits an $\mathcal{E}^f$-value $\mathbf{V}$ if the upper and lower values coincide:
$$
\underline{\mathbf{V}} = \mathbf{V} = \overline{\mathbf{V}}.
$$

In order to solve the pricing problem of this game option in the defaultable market model (\ref{Market}), we consider the DRBSDE with two reflecting barriers given as follows:
\begin{equation}
	\left\{
	\begin{split}
		\text{(i)} &~ \mathbb{P}\text{-a.s. for all } t \in [0,T]\\
		&~ Y_{t}=L(S_T)+\int_{t}^{T} f(s,Y_s,Z_s,V_s)ds+\left( K^{+}_{ T}-K^{+}_{t}\right)-\left( K^{-}_{ T}-K^{-}_{t }\right)\\
		&\qquad\qquad- \int_{t}^{T} Z_s dB_s - \sum_{k=1}^d \int_{t}^{T} V^{(k)}_s  d\widetilde{N}^{(k)}_s;\\
		\text{(ii)} &~  L(S_t) \leq Y_t \leq U(S_t),~  \forall t \in [0,T],~\mathbb{P}\text{-a.s.};\\
		\text{(iii)} &~ \int_0^{T  }(Y_{t-}-L(S_{t-}))dK^{+}_t=\int_0^{T  }(U(S_{t-})-Y_{t-})dK^{-}_t=0.
	\end{split}
	\right.
	\label{basic equation Finance}
\end{equation}
To ensure the well-posedness of the DRBSDE \eqref{basic equation Finance} (see Theorem \ref{basic Thm}) and the monotonicity of the nonlinear expectation (see Theorem \ref{Compa} and Remark \ref{Mono-prop}), which will be used in the sequel, we assume that assumptions \textbf{(H1)}, \textbf{(H2)}, \textbf{(H3')}, \textbf{(H4)}, and \textbf{(H5)} hold true.

The following definitions will be used in the subsequent discussion:
\begin{definition}[Strong $\mathcal{E}^f$-(sub,super)martingale]
	Let $\beta >0$, $Y \in \mathcal{S}^2_{\beta}$, and $\sigma,\tau \in \mathcal{T}_{0}^T$ such that $\sigma \leq \tau$ a.s. The process $Y$ is said to be a strong $\mathcal{E}^f$-supermartingale (resp. $\mathcal{E}^f$-submartingale) on $[\sigma,\tau]$, if for all $\sigma^{\star}, \tau^{\star} \in \mathcal{T}_{0}^T$ such that $\sigma \leq \sigma^{\star} \leq \tau^{\star} \leq \tau$ a.s., we have $\mathcal{E}^f_{\sigma^{\ast},\tau^{\ast}}(Y_{\eta^{\ast}}) \leq Y_{\sigma^{\ast}}$ (resp. $\mathcal{E}^f_{\sigma^{\ast},\tau^{\ast}}(Y_{\tau^{\ast}}) \geq Y_{\sigma^{\ast}}$) a.s. Finally, the process $Y$ is said to be a strong $\mathcal{E}^f$-martingale on $[\sigma,\tau]$, if it is both a strong $\mathcal{E}^f$-supermartingale and a strong $\mathcal{E}^f$-submartingale on $[\sigma,\tau]$.
	\label{definition strong on}
\end{definition}

\begin{definition}[$\theta$-saddle point for the game option]
	Let $\theta \in \mathcal{T}_{0}^T$. A pair $\left(\tau_1^{\star},\tau^{\star}_2\right)\in \mathcal{T}_{\theta}^T \times \mathcal{T}_{\theta}^T$ is called a $\theta$-saddle point for the
	game option if, for each $\left(\tau_1,\tau_2\right)\in \mathcal{T}_{\theta}^T \times \mathcal{T}_{\theta}^T$, we have
	$$
	\mathcal{E}^f_{\theta,\tau_1 \wedge \tau^{\star}_2}\left(\mathcal{P}(\tau_1,\tau^{\star}_2)\right) \leq  \mathcal{E}^f_{\theta,\tau^{\star}_1 \wedge \tau^{\star}_2}\left(\mathcal{P}(\tau^{\star}_1,\tau^{\star}_2)\right) \leq  \mathcal{E}^f_{\theta,\tau^{\star}_1 \wedge \eta_2}\left(\mathcal{P}(\tau^{\star}_1,\tau_2)\right).
	$$
	\label{Sadlle point definition}
\end{definition}
\begin{definition}[Left-upper semi-continuity]
	An optional process $\left( \phi_t\right)_{t \leq T}$ is said to be left-upper semi-continuous (l.u.s.c., for short) along stopping times if, for any stopping time $\tau \in \mathcal{T}_{0}^T$ and any non-decreasing sequence of stopping times $\left\{\tau_n\right\}_{n \in \mathbb{N}} \in  \left( \mathcal{T}_{0}^T\right)^{\mathbb{N}}$ such that $\tau^n \uparrow \tau$ a.s., we have
	$
	\limsup_{n \rightarrow +\infty} \phi_{\tau_n} \leq \phi_{\tau}$ a.s.
\end{definition}
\begin{remark}
	It is easy to verify that, in the particular case of an optional process $(\phi_t)_{t \leq T}$ with finite left limits, the process $(\phi_t)_{t \leq T}$ is left-upper semicontinuous (l.u.s.c.) along stopping times if and only if, for every predictable stopping time $\tau \in \mathcal{T}_{0}^T$, we have $\Delta \phi_{\tau} \geq 0$ a.s. (see Proposition 5.1 in \cite{ElmansouriOtmani2024}).
	\label{Remark continuity}
\end{remark}
From Remarks \ref{Reflection remark} and \ref{Remark continuity}, we may deduce the following proposition:
\begin{proposition}
	Assume that the DRBSDE (\ref{basic equation Finance}) admits a unique solution $\left(Y_t,Z_t,V_t,K^{+}_t,K^{-}_t\right)_{t \leq T}$. If $L\left(S_{\cdot}\right)$ (resp. $-U\left(S_{\cdot}\right)$) is l.u.s.c. along stopping times, then the reflection process $K^{+}$ (resp. $K^{-}$) is continuous on $[0,T]$.
	\label{Continuity of reflection processes}
\end{proposition}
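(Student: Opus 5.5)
We decompose $K^{+}=K^{+,c}+K^{+,d}$ as in Remark \ref{Reflection remark}. It then suffices to show that the purely discontinuous part $K^{+,d}$ vanishes identically. By the third item of Remark \ref{Reflection remark}, with $L_t=L(S_t)$,
$$
K^{+,d}_t=\sum_{0<s\leq t}\bigl(L(S_{s-})-Y_s\bigr)^{+}\mathds{1}_{\{\Delta L(S_s)<0\}\cap\{Y_{s-}=L(S_{s-})\}\cap\{\Delta Y_s<0\}}.
$$
The plan is to prove that the indicator set is evanescent, i.e.\ that a.s.\ there is no $s$ with $\Delta L(S_s)<0$ at which $K^+$ jumps.

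First, since $K^{+}$ is predictable, increasing and RCLL, the jump times of $K^{+}$ can be exhausted by a countable family of predictable stopping times $\{\tau_n\}$. This is a standard result for predictable RCLL processes, e.g.\ in \cite{Jacod2013}. Hence it is enough to show $\Delta K^{+}_{\tau}=0$ a.s.\ for every predictable stopping time $\tau$.

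Fix such a $\tau$. By Remark \ref{Remark continuity}, l.u.s.c.\ of $L(S_\cdot)$ along stopping times gives $\Delta L(S_\tau)\geq 0$ a.s. Since $L(S_\cdot)$ is RCLL, it has finite left limits, which is exactly the hypothesis needed to apply that remark. On the other hand, the second item of Remark \ref{Reflection remark} gives
$$
\Delta K^{+}_\tau=\Delta K^{+,d}_\tau=\bigl(L(S_{\tau-})-Y_\tau\bigr)^{+}\mathds{1}_{\{\Delta L(S_\tau)<0\}}.
$$
This vanishes a.s. Alternatively one can argue directly: on $\{\Delta K^+_\tau>0\}$ the Skorokhod condition forces $Y_{\tau-}=L(S_{\tau-})$. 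Since $\Delta K^+_\tau$ is a jump of a predictable finite-variation process, it enters $\Delta Y_\tau=-\Delta K^+_\tau+\Delta K^-_\tau$ at predictable times, because the martingale parts have no predictable jumps, being driven by quasi-left-continuous Poisson processes. One must also exclude simultaneous jumps $\Delta K^-_\tau>0$: that would require $Y_{\tau-}=U(S_{\tau-})$ as well, contradicting $L(S_{\tau-})<U(S_{\tau-})$ from \textbf{(H3')}-(iii). Therefore $\Delta Y_\tau=-\Delta K^+_\tau<0$, and so
$$
Y_\tau<Y_{\tau-}=L(S_{\tau-})\leq L(S_\tau),
$$
which contradicts $Y_\tau\geq L(S_\tau)$. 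Thus $\Delta K^{+}_\tau=0$ a.s., so $K^{+,d}\equiv0$ and $K^{+}$ is continuous.

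The statement for $K^{-}$ follows symmetrically, using that $-U(S_\cdot)$ is l.u.s.c., i.e.\ $\Delta U(S_\tau)\leq 0$ at predictable times, together with the upper Skorokhod condition.

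The main obstacle is the identification of $\Delta Y_\tau$ at a predictable time. This needs the claim that the stochastic integrals against $B$ and $\widetilde N^{(k)}$ have no jumps at predictable times. That holds because $\lambda$ is deterministic, hence the compensators are continuous and the $N^{(k)}$ are quasi-left-continuous. The second ingredient is the complete separation at $\tau-$, which rules out simultaneous jumps of $K^+$ and $K^-$. Neither step requires more than results already stated earlier in the paper.
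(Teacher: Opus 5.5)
Your main route is the paper's: the paper gives no separate proof and deduces the statement directly from Remark \ref{Reflection remark} (the jump formula for $K^{+,d}$) and Remark \ref{Remark continuity} ($\Delta L(S_\tau)\geq 0$ at predictable $\tau$). You make explicit the exhaustion of the jumps of the predictable process $K^{+}$ by predictable times, which the paper leaves implicit.

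One step in your direct verification does not hold as written. On $\{\tau=T\}$ you cannot invoke \textbf{(H3')}-(iii), since it separates the left limits only for $t\in[0,T[$. In fact the separation fails at $T$:
\begin{itemize}
\item $S$ has no jump at the fixed time $T$, and $L,U$ are continuous, so \textbf{(H3')}-(i) gives $L(S_{T-})=U(S_{T-})$ a.s.
\item Hence $Y_{T-}=Y_T$, so $\Delta K^{+}_T=\Delta K^{-}_T$.
\item The Skorokhod conditions alone then do not exclude a common positive jump at $T$.
\end{itemize}

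You can close this case with the uniqueness hypothesis already built into the proposition. Replace $K^{\pm}$ by $K^{\pm}-\Delta K^{+}_T\mathds{1}_{\{t=T\}}$. These processes are still predictable, because $\Delta K^{+}_T$ is $\mathcal{F}_{T-}$-measurable, and they are still increasing and in $\mathcal{K}^2$. They satisfy (i)--(iii) of \eqref{basic equation Finance}, so they give another solution, and uniqueness forces $\Delta K^{\pm}_T=0$.

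The jump formula from Remark \ref{Reflection remark} that you and the paper quote rests on the same separation. At $T$ it is justified only once such a common jump has been ruled out in this way.
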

The principal result of the current section can now be stated as follows:
\begin{theorem}
	Let $\big(Y_t,Z_t,V_t,K^{+}_t,K^{-}_t\big)_{t \leq T}$ be the unique solution of the DRBSDE (\ref{basic equation Finance}). For any $\theta \in \mathcal{T}_{0}^T$, the $\mathcal{E}^f$-value of our game option at any time $\theta$ is given by:
	\begin{equation*}
		\essinf_{\eta_2 \in \mathcal{T}_{\theta}^T} \esssup_{\eta_1 \in \mathcal{T}_{\theta}^T} \mathcal{E}^f_{\theta,\eta_1 \wedge \eta_2}(\mathcal{P}(\eta_1,\eta_2))=Y_{\theta}=\esssup_{\eta_1 \in \mathcal{T}_{\theta}^T} \essinf_{\eta_2 \in \mathcal{T}_{\theta}^T} \mathcal{E}^f_{\theta,\eta_1 \wedge \eta_2}(\mathcal{P}(\eta_1,\eta_2)).
		\label{Wanted inequality}
	\end{equation*}
	In particular $\mathbf{V}=Y_0$, i.e. the initial value $Y_0$ represents the $\mathcal{E}^f$-value of the game option.
	
	Assume moreover, that $L(S_{\cdot})$ and $-U(S_{\cdot})$ are l.u.s.c.  along stopping times, then the pair $(	\eta^{\theta,\star}_1,	\eta^{\theta,\star}_2)$ given by
	$$
	\eta^{\theta,\star}_1=\inf\{t \geq \theta:Y_t=L(S_t)\} \text{ and } \eta^{\theta,\star}_2=\inf\{t \geq \theta:Y_t=U(S_t)\},
	$$
	defines a $\theta$-saddle point for the game option.
	\label{Main result}
\end{theorem}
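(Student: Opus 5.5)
The plan is to follow the approach of Dumitrescu, Quenez and Sulem for generalized Dynkin games: build $\varepsilon$-optimal stopping times from the solution $Y$ of \eqref{basic equation Finance}, and compare $\mathcal{E}^f$-expectations using Theorem \ref{Compa} and Remark \ref{Mono-prop}.

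Fix $\theta\in\mathcal{T}_0^T$ and $\varepsilon>0$. I would set
\[
\tau^\varepsilon_1:=\inf\{t\ge\theta: Y_t\le L(S_t)+\varepsilon\}\wedge T,\qquad \tau^\varepsilon_2:=\inf\{t\ge\theta: Y_t\ge U(S_t)-\varepsilon\}\wedge T.
\]
On $[\theta,\tau^\varepsilon_1)$ we have $Y_{t}>L(S_t)+\varepsilon$, and then $Y_{t-}\ge L(S_{t-})+\varepsilon$ on $(\theta,\tau^\varepsilon_1]$. By the Skorokhod condition (iii), $K^+$ does not increase on $[\theta,\tau^\varepsilon_1]$, with care needed at $\theta$ itself. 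Hence on $[\theta,\tau^\varepsilon_1\wedge\tau]$ the process $Y$ solves a BSDE whose only reflection is through $-dK^-$. By the comparison principle (Theorem \ref{Compa} with $L\equiv-\infty$, Remark \ref{Mono-prop}), $Y$ is a strong $\mathcal{E}^f$-submartingale there. Symmetrically, $Y$ is a strong $\mathcal{E}^f$-supermartingale on $[\theta,\tau^\varepsilon_2\wedge\tau]$. Right-continuity gives $Y_{\tau^\varepsilon_1}\le L(S_{\tau^\varepsilon_1})+\varepsilon$ on $\{\tau^\varepsilon_1<T\}$, and $Y_T=L(S_T)=U(S_T)$ by \textbf{(H3')}-(i). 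The same holds for $\tau^\varepsilon_2$.

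Next, for any $\eta_1\in\mathcal{T}_\theta^T$, write $\rho=\eta_1\wedge\tau^\varepsilon_2$. The supermartingale property gives $Y_\theta\ge \mathcal{E}^f_{\theta,\rho}(Y_\rho)$. On $\{\eta_1\le\tau^\varepsilon_2\}$ we have $Y_\rho\ge L(S_{\eta_1})$. On $\{\tau^\varepsilon_2<\eta_1\}$ we have $Y_\rho\ge U(S_{\tau^\varepsilon_2})-\varepsilon$. So $Y_\rho\ge\mathcal{P}(\eta_1,\tau^\varepsilon_2)-\varepsilon$. By monotonicity of $\mathcal{E}^f$ (Remark \ref{Mono-prop}), together with the standard a priori estimate $|\mathcal{E}^f_{\theta,\rho}(\xi+\varepsilon)-\mathcal{E}^f_{\theta,\rho}(\xi)|\le C\varepsilon$, I would get
\[
Y_\theta\ge \mathcal{E}^f_{\theta,\eta_1\wedge\tau^\varepsilon_2}(\mathcal{P}(\eta_1,\tau^\varepsilon_2))-C\varepsilon.
\]
The constant $C$ is uniform because $A_T$ is bounded by \textbf{(H5)}, which makes this a consequence of Proposition \ref{Useful proposition}.

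Symmetrically, $Y_\theta\le \mathcal{E}^f_{\theta,\tau^\varepsilon_1\wedge\eta_2}(\mathcal{P}(\tau^\varepsilon_1,\eta_2))+C\varepsilon$ for every $\eta_2$. Taking ess sup over $\eta_1$ and ess inf over $\eta_2$ then yields
\[
\overline{V}_\theta\le Y_\theta+C\varepsilon,\qquad \underline{V}_\theta\ge Y_\theta-C\varepsilon,
\]
and since $\underline V_\theta\le\overline V_\theta$ trivially, letting $\varepsilon\downarrow0$ gives the two equalities and $\mathbf V=Y_0$.

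For the saddle point, I would assume $L(S_\cdot)$ and $-U(S_\cdot)$ are l.u.s.c. Proposition \ref{Continuity of reflection processes} then makes $K^\pm$ continuous. I would run the same argument with $\varepsilon=0$, using $\eta^{\theta,\star}_1$ and $\eta^{\theta,\star}_2$. Continuity of $K^+$, together with the Skorokhod condition, shows $K^+$ is constant on $[\theta,\eta^{\theta,\star}_1]$ (again with care at $\theta$). Symmetrically, $K^-$ is constant on $[\theta,\eta^{\theta,\star}_2]$. Right-continuity of $Y-L(S)$ and $U(S)-Y$ gives $Y_{\eta^{\theta,\star}_1}=L(S_{\eta^{\theta,\star}_1})$ and $Y_{\eta^{\theta,\star}_2}=U(S_{\eta^{\theta,\star}_2})$, whenever these times are less than $T$. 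Then $Y$ is an $\mathcal{E}^f$-submartingale up to $\eta^{\theta,\star}_1$, a supermartingale up to $\eta^{\theta,\star}_2$, and an $\mathcal{E}^f$-martingale up to their minimum. This gives the two saddle inequalities of Definition \ref{Sadlle point definition} directly, with equality $Y_\theta=\mathcal{E}^f_{\theta,\eta^\star_1\wedge\eta^\star_2}(\mathcal{P}(\eta^\star_1,\eta^\star_2))$.

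The main obstacle I expect is the flatness of $K^+$ on $[\theta,\eta_1^{\theta,\star}]$ in the $\varepsilon=0$ case. If $K^+$ had a jump at $\eta^\star_1$, the submartingale property would still hold, but the terminal value $Y_{\eta^\star_1}$ would not equal $L(S_{\eta^\star_1})$. The l.u.s.c. hypothesis, through Proposition \ref{Continuity of reflection processes}, is exactly what rules this out. The continuous part of $K^+$ charges only $\{Y=L(S)\}$, so it does not increase before $\eta^\star_1$.
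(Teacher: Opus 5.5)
Your proposal is correct and follows essentially the same route as the paper's proof. The shared steps are: $\varepsilon$-hitting times on which $K^{+}$ (resp.\ $K^{-}$) is flat by the Skorokhod condition; the strong $\mathcal{E}^f$-sub/supermartingale property via the one-barrier comparison of Remark \ref{Mono-prop}; right-continuity for the terminal values; the uniform $C\varepsilon$ shift estimate from Proposition \ref{Useful proposition} under boundedness of $A_T$ in \textbf{(H5)}; and, for the saddle point, Proposition \ref{Continuity of reflection processes} to rerun the argument with $\varepsilon=0$. If anything, your direct use of the left-limit Skorokhod condition on $(\theta,\tau^\varepsilon_1]$ is slightly cleaner than the paper's split of $K^{+}$ into continuous and jump parts.
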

\begin{proof}
	\textbf{Part 1: the state process $Y$ is the $\mathcal{E}^f$-value of the game option}\\  
	
	The procedure is comprised of five distinct steps, each of which incorporates an assessment that must be demonstrated.
	\begin{enumerate}
		\item For each $\varepsilon>0$, consider the following two stopping times $\eta^{\theta,\varepsilon}_1$ and $\eta^{\theta,\varepsilon}_2$ defined as
		$$
		\eta^{\theta,\varepsilon}_1=\inf\{t \geq \theta:Y_t \leq L(S_t)+\varepsilon\}  \text{ and } \eta^{\theta}_2=\inf\{t \geq \theta:Y_t \geq U(S_t)-\varepsilon\}.
		$$
		The reflection process $K^{+}$ is a.s. constant on $\big[\theta,\eta^{\theta,\varepsilon}_1\big]$  and $K^{-}$ is a.s. constant on $\big[\theta,\eta^{\theta,\varepsilon}_2\big]$.\\
		Firstly, note that $\eta^{\theta,\varepsilon}_1$ and $\eta^{\theta,\varepsilon}_2$ are valued in $[0,T]$ as  $Y_T=L\big(S_T\big)=U\big(S_T\big)$ a.s.\\ 
		Next, for almost every $\omega \in \Omega$, based on the definition of the stopping time $\eta^{\theta,\varepsilon}_1$ and Remark \ref{Reflection remark}, we have $Y_t(\omega)>L\big(S_t(\omega)\big)+\varepsilon>L\big(S_t(\omega)\big)$ for all $t \in \big[\theta(\omega),\eta^{\theta,\varepsilon}_1(\omega)\big[$. Hence, for almost every $\omega \in \Omega$, the continuous function $t \mapsto K^{+,c}_t(\omega)$ is constant on $\big[ \theta(\omega),\eta^{\theta,\varepsilon}_1(\omega)\big[$. By continuity, we deduce that $t \mapsto K^{+,c}_t(\omega)$ is constant on $\big[ \theta(\omega),\eta^{\theta,\varepsilon}_1(\omega)\big]$. On the other hand, it is clear that for almost every $\omega \in \Omega$, we have $Y_{t-} \geq L\big(S_{t-}\big)+\varepsilon$ for all $t \in \big[\theta(\omega),\eta^{\theta,\varepsilon}_1(\omega)\big[$. Therefore, the purely discontinuous function $t \mapsto K^{+,d}_t(\omega)$ is constant on $\big[ \theta(\omega),\eta^{\theta,\varepsilon}_1(\omega)\big[$. Moreover, as $Y_{\eta^{\theta,\varepsilon}_1-} \geq L\big(S_{\eta^{\theta,\varepsilon}_1-}\big)+\varepsilon>L\big(S_{\eta^{\theta,\varepsilon}_1-}\big) $ a.s., we deduce that $\Delta K^{+,d}_{\eta^{\theta,\varepsilon}_1}=0$. Then, for almost every $\omega \in \Omega$, the function $t \mapsto K^{+,d}_t(\omega)$ is constant on $\big[ \theta(\omega),\eta^{\theta,\varepsilon}_1(\omega)\big]$. Hence, almost surely, $K^{+}$ is constant on $\big[ \theta,\eta^{\theta,\varepsilon}_1\big]$ as $K^{+}=K^{+,c}+K^{+,d}$.\\ 
		Similarly, $K^{-}$ is almost surely constant on $\big[ \theta,\eta^{\theta,\varepsilon}_2\big]$.
		\item Let us verify that the state process $Y$ is a strong $\mathcal{E}^f$-submartingale on $[\theta,\eta^{\theta,\varepsilon}_1]$ and a strong $\mathcal{E}^f$-supermartingale on $[\theta,\eta^{\theta,\varepsilon}_2]$.\\
		From the previous step, we deduce that $(Y_t,Z_t,V_t,K^{-}_t)_{t \leq T}$ is a solution on $\big[ \theta,\eta^{\theta,\varepsilon}_1\big]$ of the reflected BSDE associated with driver $f$, upper obstacle $U\big(S_{\cdot}\big)$, terminal time $\eta^{\theta,\varepsilon}_1$, and a terminal condition $Y_{\eta^{\theta,\varepsilon}_1}$. Similarly, the process $(Y_t,Z_t,V_t,K^{+}_t)_{t \leq T}$ is a solution on $\big[ \theta,\eta^{\theta,\varepsilon}_2\big]$ of the reflected BSDE associated with driver $f$, lower barrier $L\big( S^1_{\cdot}\big)$, terminal time $\eta^{\theta,\varepsilon}_2$, and a terminal condition $Y_{\eta^{\theta,\varepsilon}_2}$. Then, from Remark \ref{Mono-prop}, we deduce that $Y_t \leq \mathcal{E}^f_{t,\eta^{\theta,\varepsilon}_1}\big(Y_{\eta^{\theta,\varepsilon}_1}\big)$ for all $t \in \big[\theta,\eta^{\theta,\varepsilon}_1\big]$ a.s. and $Y_t \geq \mathcal{E}^f_{t,\eta^{\theta,\varepsilon}_2}\big(Y_{\eta^{\theta,\varepsilon}_2}\big)$ for all $t \in \big[\theta,\eta^{\theta,\varepsilon}_2\big]$ a.s.
		\item Let us show that 
		$$Y_{\eta^{\theta,\varepsilon}_1}\leq L\left(S_{\eta^{\theta,\varepsilon}_1}\right)+\varepsilon\ a.s. \mbox{ and }  U\left(S_{\eta^{\theta,\varepsilon}_2}\right)-\varepsilon \leq Y_{\eta^{\theta,\varepsilon}_2}.$$
		Let us fix $\omega \in \Omega$. As $T \in \{t \geq \theta:Y_t(\omega) \leq L(S_t(\omega))+\varepsilon\} \cap \{t \geq \theta:Y_t(\omega) \geq U(S_t(\omega))-\varepsilon\}$,  then by a basic property of the infimum, there exists two sequences $\{\mathfrak{t}^1_n(\omega)\}_{n \in \mathbb{N}}$ and $\{\mathfrak{t}^2_n(\omega)\}_{n \in \mathbb{N}}$ such that $\mathfrak{t}^1_n(\omega) \downarrow \eta^{\theta,\varepsilon}_1(\omega)$, $\mathfrak{t}^2_n(\omega) \downarrow \eta^{\theta,\varepsilon}_2(\omega)$ as $n \to +\infty$ and 
		$$Y_{\mathfrak{t}^1_n(\omega)} \leq L\left(S_{\mathfrak{t}^1_n(\omega)}\right)+\varepsilon,\  Y_{\mathfrak{t}^2_n(\omega)} \geq U\left(S_{\mathfrak{t}^2_n(\omega)}\right)-\varepsilon.$$ 
		The result follows then from  the right-continuity of $Y$, $L(S_{\cdot})$ and $U(S_{\cdot})$.
		\item We propose to show that, for any $\eta \in \mathfrak{T}_{\theta,T}$, 
		$$\mathcal{E}^f_{\theta,\eta \wedge\eta^{\theta,\varepsilon}_2}\left(\mathcal{P}(\eta,\eta^{\theta,\varepsilon}_2) \right)-\mathfrak{C}_{\beta} \varepsilon \leq Y_{\theta}\leq \mathcal{E}^f_{\theta,\eta^{\theta,\varepsilon}_1 \wedge \eta }\left(\mathcal{P}(\eta^{\theta,\varepsilon}_1,\eta) \right)+\mathfrak{C}_{\beta} \varepsilon.$$
		Indeed, let $\eta \in \mathfrak{T}_{\theta,T}$.
		From the second step, we conclude that $Y_{\theta} \geq \mathcal{E}^f_{\theta,\eta \wedge\eta^{\theta,\varepsilon}_2}\big(Y_{\eta \wedge\eta^{\theta,\varepsilon}_2}\big)$.\\ 
		Now, on the one hand, from DRBSDE \eqref{basic equation Finance}-(ii), we have $Y \geq L(S_{\cdot})$ on $[0,T]$. On the other hand, using the third step of the current proof, we derive $U\big(S_{\eta^{\theta,\varepsilon}_2}\big)-\varepsilon \leq Y_{\eta^{\theta,\varepsilon}_2}$ a.s.. Hence, combining these two facts, we deduce that
		$$Y_{\eta \wedge\eta^{\theta,\varepsilon}_2} \geq L\left(S_{\eta}\right) \mathds{1}_{\{\eta \leq \eta^{\theta,\varepsilon}_2\}}+\left(U\left(S_{\eta^{\theta,\varepsilon}_2}\right)-\varepsilon\right)\mathds{1}_{\{ \eta^{\theta,\varepsilon}_2<\eta\}}  \geq \mathcal{P}(\eta,\eta^{\theta,\varepsilon}_2)-\varepsilon.$$
		Then, we get 
		$$Y_{\theta} \geq \mathcal{E}^f_{\theta,\eta \wedge\eta^{\theta,\varepsilon}_2}\left(Y_{\eta \wedge\eta^{\theta,\varepsilon}_2}\right) \geq \mathcal{E}^f_{\theta,\eta \wedge\eta^{\theta,\varepsilon}_2}\left(\mathcal{P}(\eta,\eta^{\theta,\varepsilon}_2)-\varepsilon \right) .$$
		
		Now, using assumption \textbf{(H5)} and Proposition \ref{Useful proposition}, we deduce that
		$$
		\left| \mathcal{E}^f_{\theta,\eta \wedge\eta^{\theta,\varepsilon}_2}\left(\mathcal{P}(\eta,\eta^{\theta,\varepsilon}_2)-\varepsilon \right)-\mathcal{E}^f_{\theta,\eta \wedge\eta^{\theta,\varepsilon}_2}\left(\mathcal{P}(\eta,\eta^{\theta,\varepsilon}_2) \right) \right| \leq \mathfrak{C}_{\beta} \varepsilon,
		$$
		In particular, we have
		$$\mathcal{E}^f_{\theta,\eta \wedge\eta^{\theta,\varepsilon}_2}\left(\mathcal{P}(\eta,\eta^{\theta,\varepsilon}_2)-\varepsilon \right)\geq \mathcal{E}^f_{\theta,\eta \wedge\eta^{\theta,\varepsilon}_2}\left(\mathcal{P}(\eta,\eta^{\theta,\varepsilon}_2) \right)-\mathfrak{C}_{\beta} \varepsilon. $$
		Therefore
		$$Y_{\theta}\geq \mathcal{E}^f_{\theta,\eta \wedge\eta^{\theta,\varepsilon}_2}\left(\mathcal{P}(\eta,\eta^{\theta,\varepsilon}_2) \right)-\mathfrak{C}_{\beta} \varepsilon.$$
		In a similar way, we can show that $Y_{\theta}\leq \mathcal{E}^f_{\theta,\eta \wedge\eta^{\theta,\varepsilon}_1}\big(\mathcal{P}(\eta^{\theta,\varepsilon}_1,\eta) \big)+\mathfrak{C}_{\beta} \varepsilon$.
		\item From the forth step, we can easily derive that
		$$
		Y_{\theta}\geq\esssup_{\eta_1 \in \mathcal{T}_{\theta}^T}\mathcal{E}^f_{\theta,\eta_1 \wedge\eta^{\theta,\varepsilon}_2}\left(\mathcal{P}(\eta_1,\eta^{\theta,\varepsilon}_2) \right)-\mathfrak{C}_{\beta} \varepsilon,
		$$
		and
		$$
		Y_{\theta}\leq \essinf_{\eta_2 \in \mathcal{T}_{\theta}^T} \mathcal{E}^f_{\theta,\eta^{\theta,\varepsilon}_1 \wedge \eta_2 }\left(\mathcal{P}(\eta^{\theta,\varepsilon}_1,\eta_2) \right)+\mathfrak{C}_{\beta} \varepsilon.
		$$
		Then
		$$
		Y_{\theta} \geq \essinf_{\eta_2 \in \mathcal{T}_{\theta}^T}\esssup_{\eta_1 \in \mathcal{T}_{\theta}^T}\mathcal{E}^f_{\theta,\eta_1 \wedge\eta_2}\left(\mathcal{P}(\eta_1,\eta_2) \right)-\mathfrak{C}_{\beta} \varepsilon,
		$$
		and
		$$
		Y_{\theta}\leq \esssup_{\eta_1 \in \mathcal{T}_{\theta}^T}\essinf_{\eta_2 \in \mathcal{T}_{\theta}^T} \mathcal{E}^f_{\theta,\eta_1 \wedge \eta_2 }\left(\mathcal{P}(\eta_1,\eta_2) \right)+\mathfrak{C}_{\beta} \varepsilon.
		$$
		As the above two inequalities hold for almost every $\omega \in \Omega$ and for any $\varepsilon>0$, passing to the limits on both sides together with the fact that
		$$
		\esssup_{\eta_1 \in \mathcal{T}_{\theta}^T}\essinf_{\eta_2 \in \mathcal{T}_{\theta}^T} \mathcal{E}^f_{\theta,\eta_1 \wedge \eta_2 }\left(\mathcal{P}(\eta_1,\eta_2) \right)\leq \essinf_{\eta_2 \in \mathcal{T}_{\theta}^T}\esssup_{\eta_1 \in \mathcal{T}_{\theta}^T}\mathcal{E}^f_{\theta,\eta_1 \wedge\eta_2}\left(\mathcal{P}(\eta_1,\eta_2) \right),
		$$
		we conclude that the state variable $Y$ of the DRBSDE \eqref{basic equation Finance} coincides with the $\mathcal{E}^f$-value process of the game option
	\end{enumerate}
	\textbf{ Part 2: The pair $(	\eta^{\theta,\star}_1,	\eta^{\theta,\star}_2)$ is a $\theta$-saddle point} \\
	
	From Proposition \ref{Continuity of reflection processes}, we deduce that the state process $Y$ is a strong $\mathcal{E}^f$-martingale on $[\theta,\eta^{\theta,\star}_1 \wedge \eta^{\theta,\star}_2]$. Moreover, since $Y$, $L(S_{\cdot})$ and $U(S_{\cdot})$  are right-continuous processes, we have $Y_{\eta^{\theta,\star}_1}=L\big(S_{\eta^{\theta,\star}_1}\big)$ and $Y_{\eta^{\theta,\star}_2}=U\big(S_{\eta^{\theta,\star}_2}\big)$. Therefore, we have:
	\begin{equation*}
		\begin{split}
			Y_{\theta}&=\mathcal{E}^f_{\theta,\eta^{\theta,\star}_1 \wedge \eta^{\theta,\star}_2}\left(L\big(S_{\eta^{\theta,\star}_1}\big)\mathds{1}_{\{\eta^{\theta,\star}_1 \leq \eta^{\theta,\star}_2\}}+U\big(S_{\eta^{\theta,\star}_2}\big) \mathds{1}_{\{\eta^{\theta,\star}_2<\eta^{\theta,\star}_1\}} \right)\\
			&=\mathcal{E}^f_{\theta,\eta^{\theta,\star}_1 \wedge \eta^{\theta,\star}_2}\left(\mathcal{P}(\eta^{\theta,\star}_1,\eta^{\theta,\star}_2)\right).
		\end{split}
	\end{equation*}
	Now, a similar argument as the one used in the second step of the Part 1, together with the continuity of the processes $K^{+}$, allows us to conclude that $Y$ is a strong $\mathcal{E}^f$-submartingale on $[\theta,\eta^{\theta,\star}_1]$. Henceforth, for any $\eta_2 \in \mathcal{T}_{\theta}^T$, we have
	$$
	Y_{\theta} \leq \mathcal{E}^f_{\theta, \eta^{\theta,\star}_1 \wedge \eta_2}\left(Y_{\eta^{\theta,\star}_1 \wedge \eta_2}\right)=\mathcal{E}^f_{\theta,\eta^{\theta,\star}_1 \wedge \eta_2}\left(L\big(S_{\eta^{\theta,\star}_1}\big)\mathds{1}_{\{\eta^{\theta,\star}_1 \leq \eta_2\}}+Y_{\eta_2} \mathds{1}_{\{\eta_2<\eta^{\theta,\star}_1\}} \right).
	$$
	Using again the monotonicity of the $\mathcal{E}^f$-expectation and $Y_{\eta_2}  \leq U_{\eta_2} $, we deduce that
	$$	Y_{\theta} \leq \mathcal{E}^f_{\theta,\eta^{\theta,\star}_1 \wedge \eta_2}\left(\mathcal{P}(\eta^{\theta,\star}_1,\eta_2)\right) \quad \text{ a.s.} $$
	Then 
	$$\mathcal{E}^f_{\theta,\eta^{\theta,\star}_1 \wedge \eta^{\theta,\star}_2}\left(\mathcal{P}(\eta^{\theta,\star}_1,\eta^{\theta,\star}_2)\right) \leq \mathcal{E}^f_{\theta,\eta^{\theta,\star}_1 \wedge \eta_2}\left(\mathcal{P}(\eta^{\theta,\star}_1,\eta_2)\right),\quad \text{ a.s.}$$ 
	
	Similarly, using the fact that $Y$ is a strong $\mathcal{E}^f$-supermartingale on $[\theta,\eta^{\theta,\star}_2]$, for any $\eta_1 \in \mathcal{T}_{\theta}^T$, we derive that $$
	\mathcal{E}^f_{\theta,\eta_1 \wedge \eta^{\star}_2}\left(\mathcal{P}(\eta_1,\eta^{\star}_2)\right) \leq  \mathcal{E}^f_{\theta,\eta^{\star}_1 \wedge \eta^{\star}_2}\left(\mathcal{P}(\eta^{\star}_1,\eta^{\star}_2)\right),\quad \text{ a.s.}
	$$ 
	
	Completing the proof.
\end{proof}

\subsection{Generalized Dynkin game}

Building on the results established in Section \ref{sec4}, we now turn to a game-theoretic application by considering generalized Dynkin games under nonlinear expectation.

For any $\theta \in \mathcal{T}_{0,T}$ and any pair $(\tau,\sigma) \in \mathcal{T}_{\theta,T}\times\mathcal{T}_{\theta,T}$, we define the payoff of the generalized Dynkin game by
\begin{equation}
	\mathcal{I}(\tau,\sigma)
	:=
	L_{\tau}\mathds{1}_{\{\tau\leq \sigma\}}
	+
	U_{\sigma}\mathds{1}_{\{\sigma<\tau\}}.
	\label{payoff generalized Dynkin game}
\end{equation}

The upper and lower values of this game is given respectively by:
\begin{equation}
	\underline{\mathcal{V}}_{\theta}
	:=
	\esssup_{\tau\in\mathcal{T}_{\theta,T}}
	\essinf_{\sigma\in\mathcal{T}_{\theta,T}}
	\mathcal{E}^{f}_{\theta,\tau\wedge\sigma}
	\left(\mathcal{I}(\tau,\sigma)\right),
	\label{lower value generalized Dynkin game}
\end{equation}
and
\begin{equation}
	\overline{\mathcal{V}}_{\theta}
	:=
	\essinf_{\sigma\in\mathcal{T}_{\theta,T}}
	\esssup_{\tau\in\mathcal{T}_{\theta,T}}
	\mathcal{E}^{f}_{\theta,\tau\wedge\sigma}
	\left(\mathcal{I}(\tau,\sigma)\right).
	\label{upper value generalized Dynkin game}
\end{equation}

The main result is given in the following theorem.
\begin{theorem}
	Let $(Y,Z,V,K^{+},K^{-})$ be the unique solution of the DRBSDE \eqref{basic equation} associated with the driver $f$, terminal condition $\xi$, lower obstacle $L$, and upper obstacle $U$. Then, for every $\theta\in\mathcal{T}_{0,T}$,
	\begin{equation}
		Y_{\theta}
		=
		\underline{\mathcal{V}}_{\theta}
		=
		\overline{\mathcal{V}}_{\theta},
		\qquad \text{a.s.}
		\label{value generalized Dynkin game}
	\end{equation}
	Consequently, the generalized Dynkin game \eqref{payoff generalized Dynkin game}--\eqref{upper value generalized Dynkin game} admits a value under the nonlinear expectation $\mathcal{E}^{f}$, and this value is characterized by the first component of the solution to the associated DRBSDE.
\end{theorem}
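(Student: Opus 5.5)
The plan is to reproduce, for general RCLL barriers $(L,U)$ and terminal value $\xi$, the $\varepsilon$-optimal stopping argument of Part 1 of the proof of Theorem \ref{Main result}. Nothing there used the Markovian form $L_t=L(S_t)$, $U_t=U(S_t)$ except to guarantee $Y_T=L_T=U_T$. Since in general only $L_T\le\xi\le U_T$, I first replace the barriers by $L^{\xi}_t:=L_t\mathds{1}_{\{t<T\}}+\xi\mathds{1}_{\{t=T\}}$ and $U^{\xi}_t:=U_t\mathds{1}_{\{t<T\}}+\xi\mathds{1}_{\{t=T\}}$, as in the proof of Theorem \ref{Existence and uniquenss: g}. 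These replacements are still RCLL and satisfy $L^{\xi}\le Y\le U^{\xi}$. Moreover, on $\{\tau\wedge\sigma=T\}$ the payoff $\mathcal{I}(\tau,\sigma)=L_T$ is dominated by $\xi$, which is the only place where the change matters; I handle it below by monotonicity. I work under (H1)--(H5), so that Theorem \ref{basic Thm}, Proposition \ref{Useful proposition} and Theorem \ref{Compa} with Remark \ref{Mono-prop} are available.

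For fixed $\theta$ and $\varepsilon>0$, set $\tau^{\varepsilon}:=\inf\{t\ge\theta: Y_t\le L^{\xi}_t+\varepsilon\}$ and $\sigma^{\varepsilon}:=\inf\{t\ge\theta: Y_t\ge U^{\xi}_t-\varepsilon\}$; both are $\le T$. The argument then runs in four steps, each mirroring a step of Theorem \ref{Main result}.

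First, exactly as in step 1 there, the Skorokhod condition and Remark \ref{Reflection remark} show that $K^{+}$ is constant on $[\theta,\tau^{\varepsilon}]$ and $K^{-}$ is constant on $[\theta,\sigma^{\varepsilon}]$. The only delicate point is the jump at the hitting time itself, which is excluded because $Y_{\tau^{\varepsilon}-}\ge L_{\tau^{\varepsilon}-}+\varepsilon$.

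Second, on $[\theta,\sigma^{\varepsilon}]$ the triple $(Y,Z,V,K^{+})$ solves a lower-reflected BSDE. Comparing it via Remark \ref{Mono-prop} with the BSDE defining $\mathcal{E}^f_{\cdot,\tau\wedge\sigma^{\varepsilon}}(Y_{\tau\wedge\sigma^{\varepsilon}})$, where the extra term $dK^{+}\ge0$ plays the role of a larger driver, gives $Y_\theta\ge\mathcal{E}^f_{\theta,\tau\wedge\sigma^{\varepsilon}}(Y_{\tau\wedge\sigma^{\varepsilon}})$ for every $\tau\in\mathcal{T}_{\theta,T}$.

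Third, right-continuity yields $Y_{\sigma^{\varepsilon}}\ge U^{\xi}_{\sigma^{\varepsilon}}-\varepsilon$. Together with $Y\ge L$ and $\xi\ge L_T$, this gives $Y_{\tau\wedge\sigma^{\varepsilon}}\ge \mathcal{I}(\tau,\sigma^{\varepsilon})-\varepsilon$. On $\{\sigma^{\varepsilon}=T<\tau\}$ the indicator convention puts us in the case $\tau\le\sigma$ with payoff $L_T\le\xi$, so the inequality still holds.

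Fourth, monotonicity of $\mathcal{E}^f$ (Theorem \ref{Compa}) combined with the Lipschitz estimate of Proposition \ref{Useful proposition}, applied to two BSDEs with the same driver and with terminal values differing by $\varepsilon$ (so that the $\bar f$ and barrier terms vanish), gives $|\mathcal{E}^f_{\theta,\rho}(\eta-\varepsilon)-\mathcal{E}^f_{\theta,\rho}(\eta)|\le\mathfrak{C}_\beta\varepsilon$. Here the boundedness of $A_T$ from (H5) is used to remove the exponential weights.

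These steps give $Y_\theta\ge \esssup_{\tau}\mathcal{E}^f_{\theta,\tau\wedge\sigma^{\varepsilon}}(\mathcal{I}(\tau,\sigma^{\varepsilon}))-\mathfrak{C}_\beta\varepsilon\ge\overline{\mathcal{V}}_\theta-\mathfrak{C}_\beta\varepsilon$. Symmetrically, using $\tau^{\varepsilon}$ and the upper-reflected comparison, $Y_\theta\le\underline{\mathcal{V}}_\theta+\mathfrak{C}_\beta\varepsilon$. Combining these with the trivial inequality $\underline{\mathcal{V}}_\theta\le\overline{\mathcal{V}}_\theta$ and letting $\varepsilon\downarrow0$ yields \eqref{value generalized Dynkin game}.

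The main obstacle is making the terminal-time mismatch rigorous in the symmetric bound, i.e. showing $Y_{\tau^{\varepsilon}\wedge\sigma}\le\mathcal{I}(\tau^{\varepsilon},\sigma)+\varepsilon$. On $\{\sigma<\tau^{\varepsilon}\}$ one uses $Y_\sigma\le U_\sigma$ when $\sigma<T$. On $\{\tau^{\varepsilon}=T\le\sigma\}$, however, the payoff is $L_T$ while $Y_T=\xi$, which can be strictly larger, so the upper inequality may fail there. I would first try to show that this event contributes at most $\varepsilon$. If that fails, I would adopt the standard convention that the payoff at $T$ equals $\xi$, as implicitly done in Theorem \ref{Main result}, where $L(S_T)=U(S_T)=\xi$.
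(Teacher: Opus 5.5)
Your route is the one the paper intends. Its proof of this theorem is only the sentence that the assertions follow by adapting Part 1 of the proof of Theorem \ref{Main result}, and your Steps 1--4 are that adaptation. Steps 1--4 and the resulting lower bound $Y_\theta\ge\overline{\mathcal{V}}_\theta-\mathfrak{C}_\beta\varepsilon$ are correct for general data. No convention is needed there at all: the event $\{\sigma^{\varepsilon}=T<\tau\}$ is empty because $\tau\le T$, and $Y_T=\xi\ge L_T$.

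The obstacle you flag at the end, however, is a genuine gap, and your first remedy cannot succeed. On $\{\tau^{\varepsilon}=\sigma=T\}$ the error is $\xi-L_T$, not $O(\varepsilon)$, and the statement itself is false when $\xi\neq L_T$. Here is a counterexample satisfying all the hypotheses:
\begin{itemize}
\item Take $f\equiv 0$ with $\alpha\equiv 1$ and $\zeta\equiv 0$, so (H2) and (H5) hold.
\item Take $L\equiv -1$, $U\equiv 1$, $\xi=0$, and $\mathsf{Y}\equiv 0$ in (H4).
\end{itemize}
The unique solution is $Y\equiv 0$, $Z=V=K^{\pm}=0$, and $\mathcal{E}^f$ is just the conditional expectation. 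Always $\mathcal{I}\ge -1$, and $\mathcal{I}(\tau,T)=L_\tau=-1$ for every $\tau\in\mathcal{T}_{\theta,T}$. So the minimizer's choice $\sigma=T$ gives $\underline{\mathcal{V}}_\theta=\overline{\mathcal{V}}_\theta=-1\neq 0=Y_\theta$.

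What is missing is therefore a hypothesis, not a sharper estimate. You must either assume $\xi=L_T$, or redefine the payoff to equal $\xi$ on $\{\tau=\sigma=T\}$. The condition $\xi=L_T$ is exactly what holds in Theorem \ref{Main result}, where $\xi=L(S_T)=U(S_T)$. Under either fix, $Y_{\tau^{\varepsilon}\wedge\sigma}\le\mathcal{I}(\tau^{\varepsilon},\sigma)+\varepsilon$ holds everywhere, your symmetric bound $Y_\theta\le\underline{\mathcal{V}}_\theta+\mathfrak{C}_\beta\varepsilon$ follows, and your argument closes.

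The paper's omitted proof tacitly relies on the same condition. The same correction is needed for the theorem and for the $\varepsilon$-saddle-point and saddle-point claims that follow it. In the example above, $\tau^\ast_\theta=\sigma^\ast_\theta=T$, yet $\mathcal{I}(T,T)=-1\neq Y_\theta$.
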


Let us consider a fixed stopping time $\theta\in\mathcal{T}_{0,T}$ and $\varepsilon>0$, and then define the stopping times
\begin{equation*}
	\tau_{\theta}^{\varepsilon}
	:=
	\inf\left\{t\geq\theta:Y_t\leq L_t+\varepsilon\right\}\wedge T,
	\qquad
	\sigma_{\theta}^{\varepsilon}
	:=
	\inf\left\{t\geq\theta:Y_t\geq U_t-\varepsilon\right\}\wedge T.
	\label{epsilon optimal stopping times}
\end{equation*}
Then, $\tau_{\theta}^{\varepsilon}$ and $\sigma_{\theta}^{\varepsilon}$ are approximately optimal stopping times for the maximizing and minimizing players, respectively. More precisely, there exists a constant $\mathfrak{C}>0$, independent of $\varepsilon$, such that the pair $\left(\tau_{\theta}^{\varepsilon},\sigma_{\theta}^{\varepsilon}\right)$ constitutes a $\mathfrak{C}\varepsilon$-saddle point for the generalized Dynkin \eqref{payoff generalized Dynkin game}--\eqref{upper value generalized Dynkin game}.

Furthermore, assume that the lower obstacle $L$ is left-upper semicontinuous along stopping times and that the upper obstacle $U$ is left-lower semicontinuous along stopping times. For every $\theta\in\mathcal{T}_{0,T}$, define
\begin{equation*}
	\tau_{\theta}^{\ast}
	:=
	\inf\left\{t\geq\theta:Y_t=L_t\right\}\wedge T,
	\qquad
	\sigma_{\theta}^{\ast}
	:=
	\inf\left\{t\geq\theta:Y_t=U_t\right\}\wedge T.
	\label{optimal stopping times generalized Dynkin game}
\end{equation*}
Then, the pair $\left(\tau_{\theta}^{\ast},\sigma_{\theta}^{\ast}\right)$ is a saddle point for the generalized Dynkin game \eqref{payoff generalized Dynkin game}--\eqref{upper value generalized Dynkin game}. In particular, for every $\tau,\sigma\in\mathcal{T}_{\theta,T}$,
\begin{equation*}
	\mathcal{E}^{f}_{\theta,\tau\wedge\sigma_{\theta}^{\ast}}
	\left(\mathcal{I}\left(\tau,\sigma_{\theta}^{\ast}\right)\right)
	\leq
	Y_{\theta}
	=
	\mathcal{E}^{f}_{\theta,\tau_{\theta}^{\ast}\wedge\sigma_{\theta}^{\ast}}
	\left(\mathcal{I}\left(\tau_{\theta}^{\ast},\sigma_{\theta}^{\ast}\right)\right)
	\leq
	\mathcal{E}^{f}_{\theta,\tau_{\theta}^{\ast}\wedge\sigma}
	\left(\mathcal{I}\left(\tau_{\theta}^{\ast},\sigma\right)\right).
\end{equation*}
These assertions follow by adapting the arguments used in the proof of Theorem \ref{Main result}; therefore, the details are omitted.

\section{Perspective}
Several directions for future research naturally arise from the present work. First, it would be interesting to extend the analysis to doubly reflected BSDEs with completely irregular obstacles and to investigate the associated generalized Dynkin games over the class of split stopping times, thereby extending the results obtained in \cite{ArharasOuknine2024,ElmansouriElOtmani2026SplitStopping,GrigorovaImkellerOuknineQuenez2018}. A second direction would be to consider a more general filtration, possibly incorporating a default event, and to study the nonlinear pricing of American game options in richer and more realistic financial market models, in line with the recent developments in \cite{ElmansouriElOtmani2026}. Finally, the numerical approximation of such doubly reflected BSDEs deserves further investigation. In particular, machine-learning-based numerical schemes could be developed by building on the work of Agram et al. \cite{AgramArharasPucciRems2026}, who studied a related problem in the Brownian framework under a deterministic Lipschitz condition on the driver. Another promising direction would be to explore deep reinforcement learning methods for control-oriented applications, particularly for non-Markovian optimal switching problems arising in hybrid renewable energy systems. In this regard, El Hamdi and Hdhiri \cite{ElHamdiHdhiri2027} studied a non-Markovian optimal switching problem for the operation of hybrid renewable energy systems with battery storage, using reflected BSDEs and Dynkin games, and proposed a deep-learning-based numerical approach (see also \cite{HdhiriZaatra2026}). Extending these numerical methods to the present jump setting with stochastic Lipschitz coefficients would constitute a challenging and relevant avenue for future research.

\section*{Conflict of interest} 
The authors declare that they have no competing interests.

\section*{Funding} 
No funding was received for conducting this study.

\end{document}